\theoremstyle{plain}
\newtheorem{theorem}{Theorem}[section]
\newtheorem{lemma}[theorem]{Lemma}
\newtheorem{proposition}[theorem]{Proposition}
\newtheorem{corollary}[theorem]{Corollary}
\numberwithin{equation}{section}
\theoremstyle{definition}
\newtheorem{definition}[theorem]{Definition}
\newtheorem{example}[theorem]{Example}
\newtheorem{remark}[theorem]{Remark}
\newcommand{\Mod}{{\textrm{-}\mathrm{Mod}}}
\newcommand{\dis}{{\mathrm{dis}}}
\DeclareMathOperator{\Ob}{Ob}
\DeclareMathOperator{\Aut}{Aut}
\DeclareMathOperator{\Sym}{Sym}
\DeclareMathOperator{\Hom}{Hom}
\DeclareMathOperator{\Stab}{Stab}
\DeclareMathOperator{\Age}{Age}
\DeclareMathOperator{\Res}{Res}
\newcommand{\C}{{\mathscr{C}}}
\newcommand{\D}{{\mathscr{D}}}
\newcommand{\CI}{{\mathrm{CI}}}
\newcommand{\OI}{{\mathrm{OI}}}
\newcommand{\FI}{{\mathrm{FI}}}
\newcommand{\BI}{{\mathrm{BI}}}
\newcommand{\SI}{{\mathrm{SI}}}
\newcommand{\op}{{\mathrm{op}}}
\newcommand{\Sh}{{\mathrm{Sh}}}
\newcommand{\PSh}{{\mathrm{PSh}}}
\title[Representations of permutation groups]{On representations of permutation groups and their orbit categories}
\author{Liping Li}
\address{School of Mathematics and Statistics, Hunan Normal University, Changsha 410081, China.}
\email{lipingli@hunnu.edu.cn}
\thanks{L. Li was partly supported by NSFC Grant No. 12171146.}
\keywords{Infinite permutation groups, polynomial rings, discrete representations, orbit categories.}
\begin{document}

\begin{abstract}
Given an infinite set $\Omega$ and a ring $R$ as well as a group $G$ acting on them, we show that $G$ and a subgroup $H$ share the same canonical relational structure on $\Omega$ if and only if the restriction functor gives an equivalence from the category of discrete representations of $G$ to that of $H$. Moreover, the age of this relational structure satisfies the strong amalgamation property if and only if there is a canonical isomorphism from the category of finite substructures of $\Omega$ and embeddings to the opposite category of the orbit category of $G$. As an application, we prove that finitely generated discrete representations of highly homogeneous groups over the polynomial ring $k[\Omega]$ are Noetherian.
\end{abstract}

\maketitle

\section{Introduction}

The study of infinite permutation groups occupies a central position at the intersection of group theory, combinatorics, and model theory. They are not only closely tied to combinatorial notions such as homogeneity, oligomorphy, and Ramsey-type properties, but also play a prominent role in model theory, where the automorphism group of a countable structure encodes substantial information about the definability and classification of the structure. For example, the correspondence between oligomorphic groups and $\omega$-categorical structures, developed by Engeler, Ryll-Nardzewski, and Svenonius, has been a driving force in connecting infinite group actions to logical classification problems \cite{Engeler60,Ryll60}.

In more recent years, representation theory of infinite permutation groups, in particular their actions on commutative rings emerged as a new complementary area. Applying techniques from commutative algebra, invariant theory, and algebraic geometry, people have obtained significant progress along this approach. One of the key achievements of this perspective is the development of \emph{equivariant Noetherianity}. Explicitly, let $G$ be a permutation group on an infinite set $\Omega$, and $k$ a commutative Noetherian ring. While the polynomial ring $k[\Omega]$ whose indeterminates are elements in $\Omega$ is far from Noetherian in the classical sense, it often exhibits Noetherian behavior once the symmetries of an infinite permutation group are taken into account. For instance, Cohen \cite{Co} proved that the poset of $G$-stable ideals of $R$ satisfies the ascending chain condition; Aschenbrenner and Hillar \cite{AH} showed that the ideal of relations among minors of an infinite generic matrix is finitely generated up to the natural action of the infinite general linear group. Subsequent works have shown that a broad class of rings and varieties have this property, which has had fruitful applications in algebra, geometry, and combinatorics such as conceptual explanation for the stabilization of syzygies in large algebraic structures, asymptotic behavior in algebraic statistics and tensor problems, etc. Moreover, this topic has deeply been intertwined with other areas such as representation theory of combinatorial categories. For a few references, see the works of Draisma \cite{Dr, Dr18}, Hillar-Sullivant \cite{HS}, Laudone-Snowden \cite{LS}, Van Le-Nagel-Nguyen-R\"{o}mer \cite{LNNR}, Li-Peng-Yuan \cite{LPY}, Nagel-R\"{o}mer \cite{NR}, Nagpal-Snowden \cite{NS}, Ramos \cite{Ramos}, Sam-Snowden \cite{SS1, SS2}, and many more.

In \cite{LPY} the author and his collaborators proved that $k[\Omega]$ satisfies the $G$-equivariant Noetherianity if the action of $G$ on $\Omega$ is highly homogeneous. Moreover, for two special cases, the category of discrete representations of $G$ over $k[\Omega]$ is equivalent to that of $\tilde{G}$ over $k[\Omega]$, where $\tilde{G}$ is the automorphism group of the canonical relational structure induced by the action of $G$. For details, see \cite[Theorems 1.1 and 1.3 and Corollary 5.16]{LPY}. Since $G$ and $\tilde{G}$ have the same growth rate, or equivalently they share the same orbits on each $\Omega^n$, a natural question arises as follows:

\vspace{2mm}
\textbf{Question.} Suppose that the action of $G$ on $\Omega$ is oligomorphic, and let $H$ be a subgroup of $G$. Is it true that their categories of discrete representations are equivalent if and only if their growth rates eventually coincide?\footnote{This question is raised by the reviewer and appears in \cite[Question 1.7]{LPY}.}
\vspace{2mm}

The first main result of this paper gives an affirmative answer to the above question in an even more general setup. Before describing it, let me describe necessary notions and definitions. Throughout this paper let $\Omega$ be an infinite set and $G$ a group acting on it. Impose on $G$ the \textit{permutation topology}, the topology such that stabilizer subgroups of finite subsets of $\Omega$ form a neighborhood basis of the identity. Let $R$ be a unital ring equipped with the discrete topology such that elements of $G$ act on it continuously as ring automorphisms. Denote by $RG$ the skew group ring. An $RG$-module $V$ is called \textit{discrete} if the action of $G$ on $V$ is continuous with respect to the discrete topology on $V$ and the permutation topology on $G$, or equivalently, for any $v \in V$, the stabilizer subgroup $\Stab_G(v)$ is open in $G$.

Let $RG \Mod^{\dis}$ be the category of discrete $RG$-modules, which is a Grothendieck category.

\begin{theorem} \label{thm 1}
If two groups $G$ and $H$ acting on $\Omega$ share the same orbits on $\Omega^n$ for infinitely many $n$, then we have an equivalence
\[
RG \Mod^{\dis} \simeq RH \Mod^{\dis}.
\]
Furthermore, if $H$ is a subgroup of $G$, then they share the same orbits on each $\Omega^n$ if and only if the restriction functor
\[
\Res: RG \Mod^{\dis} \longrightarrow RH \Mod^{\dis}
\]
induces the above equivalence.
\end{theorem}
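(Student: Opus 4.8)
The plan is to reduce both assertions to a single fact about closures. Write $\overline{G}\le\Sym(\Omega)$ for the closure of the image of $G$ in the permutation topology, and record two elementary reductions. First, ``same orbits on $\Omega^n$ for infinitely many $n$'' already forces ``same orbits on $\Omega^n$ for every $n$'': the padding map $(-)^+\colon\Omega^m\to\Omega^N$, $(a_1,\dots,a_m)\mapsto(a_1,\dots,a_m,a_1,\dots,a_1)$, is injective and for $m$-tuples $\bar a,\bar a'$ one has $g\bar a=\bar a'$ iff $g\bar a^+=\bar a'^+$, so two $m$-tuples are $G$-equivalent iff their paddings are, and likewise for $H$; hence agreement on $\Omega^N$ gives agreement on each $\Omega^m$ with $m\le N$. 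Second, $\overline{G}=\overline{H}$ if and only if $G$ and $H$ have the same orbits on every $\Omega^n$, because the $\overline{G}$-orbit of a finite tuple equals its $G$-orbit (any $\sigma\in\overline{G}$ agrees with some $g\in G$ on the entries of the tuple). Granting these, it suffices to show that the functor
\[
RG\Mod^{\dis}\longrightarrow R\overline{G}\Mod^{\dis}
\]
obtained by extending the $G$-action to $\overline{G}$ is an equivalence; then $RG\Mod^{\dis}\simeq R\overline{G}\Mod^{\dis}=R\overline{H}\Mod^{\dis}\simeq RH\Mod^{\dis}$.

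To construct that functor, first extend the action on $R$: for $r\in R$ choose a finite $F\subseteq\Omega$ with the pointwise stabilizer $G_{(F)}$ contained in $\Stab_G(r)$, and for $\sigma\in\overline{G}$ set $\sigma\cdot r:=g\cdot r$ for any $g\in G$ agreeing with $\sigma$ on $F$; all choices agree, and this defines an action of $\overline{G}$ on $R$ by ring automorphisms. The same recipe extends the action on a discrete $RG$-module $V$: for $v\in V$ pick finite $F$ with $G_{(F)}\subseteq\Stab_G(v)$ and put $\sigma\cdot v:=g\cdot v$. Routine verifications --- independence of $F$; that the extension is an $R$-semilinear $\overline{G}$-action with open point-stabilizers; that every $RG$-homomorphism is automatically $\overline{G}$-equivariant; and that restricting an $R\overline{G}$-module to $G$ and re-extending recovers it --- show this functor is fully faithful and essentially surjective. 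This proves the first assertion, and it yields the \emph{if} direction of the second: when $H\le G\le\overline{G}$, the composite equivalence $RG\Mod^{\dis}\to R\overline{G}\Mod^{\dis}\to RH\Mod^{\dis}$, which extends the action and then restricts it, is exactly the functor $\Res$.

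For the \emph{only if} direction I use contraposition: suppose $H\le G$ but $H\bar a\subsetneq G\bar a$ for some tuple $\bar a$; I claim $\Res$ is not full. Take the permutation module $M:=R[G\bar a]$, the free $R$-module on the set $G\bar a$ with $g\cdot(r\,e_{\bar c})=(g\cdot r)\,e_{g\bar c}$; it is a discrete $RG$-module, and as an $RH$-module $\Res M\cong\bigoplus_{\bar b}R[H\bar b]$ over representatives $\bar b$ of the $H$-orbits on $G\bar a$, of which there are at least two. The $R$-linear map $\pi\colon\Res M\to\Res M$ that is the identity on the summand $R[H\bar a]$ and zero on the others is $RH$-linear. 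It is not $RG$-linear: choosing $g\in G$ with $g\bar a\notin H\bar a$, $RG$-linearity would give $0=\pi(e_{g\bar a})=\pi(g\cdot e_{\bar a})=g\cdot\pi(e_{\bar a})=g\cdot e_{\bar a}=e_{g\bar a}$, impossible when $R\ne0$. Thus $\pi$ is not of the form $\Res(f)$ with $f\in\End_{RG}(M)$, since such an $f$ would equal $\pi$ as a map and hence be $RG$-linear; so $\Res$ is not full, and therefore not an equivalence.

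The technical heart --- and the step I expect to require the most care --- is the extension-to-the-closure equivalence of the second paragraph: checking that the extended operations are well defined and continuous, that the induced action on $R$ makes $R\overline{G}\Mod^{\dis}$ the correct target category, and that re-extension undoes restriction. Two caveats: the \emph{only if} argument needs $R\ne0$ (for $R=0$ both categories are trivial and there is nothing to prove), and the first assertion presupposes that the $G$- and $H$-actions on $R$ are compatible, i.e.\ induce the same action of $\overline{G}=\overline{H}$ on $R$ --- automatic when $H\le G$, and part of the standing hypotheses otherwise.
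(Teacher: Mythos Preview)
Your proposal is correct and follows essentially the same architecture as the paper: reduce ``infinitely many $n$'' to ``all $n$'' by a padding/restriction argument (the paper's Lemma~\ref{orbits of finite cartesian products} and Corollary~\ref{orbits on cartesian products}), identify the common closure in $\Sym(\Omega)$, prove that restriction to a dense subgroup is an equivalence by extending the action (the paper does this via nets in Proposition~\ref{essential surjectivity}, you via a direct finite-witness construction---these are the same argument in different clothing), and handle the converse by exhibiting an $H$-map that is not $G$-equivariant (the paper maps $R(G/K)\to R$, you use the idempotent endomorphism of the permutation module; both exploit the same $H$-orbit decomposition of $G/K$). Your caveats about $R\neq 0$ and about compatibility of the $G$- and $H$-actions on $R$ in the non-subgroup case are well taken; the paper leaves the latter implicit.
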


\begin{remark}
We show that $G$ and $H$ share the same orbits on each $\Omega^n$ if and only if they share the same orbits on infinitely many $\Omega^n$'s; see Corollary \ref{orbits on cartesian products}. In the case that $H$ is a subgroup of $G$ and $G$ is oligomorphic, it is equivalent to the following condition: the growth rate of $H$ and the growth rate of $G$ coincide for infinitely many $n \in \mathbb{N}$; see Corollary \ref{growth functions}. Thus this theorem indeed answers the above question.
\end{remark}

\begin{remark}
Actually, if $H$ is a subgroup of $G$, then they share the same orbits if and only if $H$ is a dense subgroup of $G$; see Proposition \ref{topological characterization}. Thus the if direction of the second statement of this theorem is an analogue to the following result which might be well known to experts: if $G$ is a totally disconnected group equipped with a neighborhood basis of the identity consisting of open subgroups, and $H$ is a dense subgroup of $G$, then the restriction functor gives an equivalence from the category of smooth representations of $G$ over a field (which are exactly discrete representations in our sense) to that of $H$. We did not find a clear statement and its proof of this result in literature, but the reader can refer to \cite{JR} for more details.
\end{remark}

The \textit{orbit category} $\mathcal{G}$ of $G$ with respect to the family of pointwise stabilizer subgroups $G_{\Gamma}$ of finite subsets $\Gamma$ plays a fundamental role in the research of discrete representation of $G$. Indeed, by \cite[Theorem 2.2]{LPY}, the category of discrete representation of $G$ is equivalent to the category of sheaves of modules over a ringed site with $\mathcal{G}$ the underlying category. Thus people may use sheaf theory and representation theory of categories to consider discrete representations of topological groups; see for instance \cite{DLLX}. On the other hand, the action of $G$ on $\Omega$ induces a canonical relational structure $\mathcal{M} = (\Omega, (R_i)_{i \in I})$, where each $R_i \subseteq \Omega^{n_i}$ is a $G$-orbit in $\Omega^{n_i}$. Thus one obtains another category $\mathcal{A}$: the category of finite substructures of $\mathcal{M}$ and embeddings.

We note that the map $\phi$ sending each finite substructure $\Gamma$ to the left coset $G/G_{\Gamma}$ gives rise to a fully faithful functor (also denoted by $\phi$) from $\mathcal{A}^{\op}$ to $\mathcal{G}$; see the proof of Theorem \ref{orbit categories}. Furthermore, there are quite a few interesting examples for which $\phi$ is an isomorphism. For instance, in the case that the action of $G$ on $\Omega$ is highly transitive, then $\mathcal{G}^{\op}$ is isomorphic to the category of finite subsets of $\Omega$ and injections; for $G = \Aut(\mathbb{Q}, \leqslant)$, then $\mathcal{G}^{\op}$ is isomorphic to the category of finite subsets of $\mathbb{Q}$ and order-preserving injections. Thus we may ask the following question:

\vspace{2mm}
\textbf{Question.} Under what conditions is the functor $\phi$ an isomorphism?
\vspace{2mm}

It turns out the strong amalgamation property introduced in \cite[Section 2.7]{Ca} is the correct condition that guarantees this isomorphism.

\begin{theorem} \label{thm 2}
The following statements are equivalent:
\begin{enumerate}
\item the functor $\phi: \mathcal{A}^{\op} \to \mathcal{G}$ is an isomorphism;

\item $\Age(\mathcal{M})$ satisfies the strong amalgamation property;

\item for every finite subsets $\Gamma \subseteq \Omega$, the stabilizer subgroup $G_{\Gamma}$ has no fixed points outside $\Gamma$.
\end{enumerate}
\end{theorem}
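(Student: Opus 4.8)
The plan is to use condition (3) as the hub and prove $(1)\Leftrightarrow(3)$ and $(2)\Leftrightarrow(3)$. Two preliminary facts underlie everything. First, because every orbit of $G$ on every $\Omega^n$ is one of the relations $R_i$ of $\mathcal{M}$, an injection between finite substructures of $\mathcal{M}$ preserves membership in every $R_i$ if and only if it preserves the $G$-orbit of a fixed enumeration of its domain, which in turn holds if and only if it is the restriction of some element of $G$; hence $\mathcal{M}$ is homogeneous, and whenever $\omega\in\Omega\setminus\Gamma$ is fixed by $G_\Gamma$, an embedding of the substructure $\Gamma\cup\{\omega\}$ into $\mathcal{M}$ is determined by its restriction to $\Gamma$. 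Second, the objects of $\mathcal{G}$ are exactly the coset spaces $G/G_\Gamma$ for finite $\Gamma\subseteq\Omega$, so $\phi$ is surjective on objects; since $\phi$ is already known to be fully faithful, $\phi$ is an isomorphism of categories if and only if it is injective on objects, i.e. (as the objects of $\mathcal{A}$ are the finite subsets of $\Omega$ and $\phi$ sends $\Gamma$ to $G/G_\Gamma$) if and only if $G_\Gamma=G_{\Gamma'}$ forces $\Gamma=\Gamma'$ for all finite $\Gamma,\Gamma'$.

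Granting this, $(1)\Leftrightarrow(3)$ is short: if (3) holds and $G_\Gamma=G_{\Gamma'}$, then $G_{\Gamma'}$ fixes $\Gamma$ pointwise, so $\Gamma\subseteq\Gamma'$ by (3) applied to $\Gamma'$, and by symmetry $\Gamma=\Gamma'$; conversely, if (3) fails, pick a finite $\Gamma$ and $\omega\in\Omega\setminus\Gamma$ with $G_\Gamma$ fixing $\omega$, so that $G_{\Gamma\cup\{\omega\}}=G_\Gamma\cap\Stab_G(\omega)=G_\Gamma$ while $\Gamma\cup\{\omega\}\neq\Gamma$, violating injectivity on objects. For $(2)\Rightarrow(3)$ I argue contrapositively with the same data, taking $A=\Gamma$, $B_1=B_2=\Gamma\cup\{\omega\}$ as substructures and $f_1=f_2$ the inclusion: in any amalgam $g_1\colon B_1\to C$, $g_2\colon B_2\to C$ with $g_1f_1=g_2f_2$, choose an embedding $C\hookrightarrow\mathcal{M}$; the two resulting embeddings of $\Gamma\cup\{\omega\}$ agree on $\Gamma$, hence (first preliminary) on $\omega$, so $g_1(\omega)=g_2(\omega)$. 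This point lies in $g_1(B_1)\cap g_2(B_2)$, so strongness would force $g_1(\omega)\in g_1f_1(A)=g_1(\Gamma)$, i.e. $\omega\in\Gamma$, a contradiction; thus $\Age(\mathcal{M})$ fails the strong amalgamation property.

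The remaining implication $(3)\Rightarrow(2)$ rests on the step I expect to be the main obstacle: assuming (3), for every finite $\Gamma\subseteq\Omega$ and every $\omega\in\Omega\setminus\Gamma$ the orbit $G_\Gamma\cdot\omega$ is infinite. I would prove this by a minimal-counterexample argument: a counterexample orbit of size one would be a fixed point of $G_\Gamma$ outside $\Gamma$, contradicting (3), so one may pick a pair $(\Gamma,O)$ with $\Gamma$ finite, $O$ a finite $G_\Gamma$-orbit disjoint from $\Gamma$ with $|O|\geq 2$ and $|O|$ least possible; for $o\in O$, the nonempty set $O\setminus\{o\}$ is a union of finite $G_{\Gamma\cup\{o\}}$-orbits, and either one of them has size at least $2$, contradicting minimality (being disjoint from the finite set $\Gamma\cup\{o\}$), or every point of $O\setminus\{o\}$ is fixed by $G_{\Gamma\cup\{o\}}$, contradicting (3). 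Given this, SAP follows by the usual one-point-at-a-time amalgamation: for $f_1\colon A\to B_1$ and $f_2\colon A\to B_2$, fix an embedding $\iota_1\colon B_1\hookrightarrow\mathcal{M}$ and extend $\iota_1|_A$ to an embedding $\iota_2\colon B_2\hookrightarrow\mathcal{M}$ by enumerating $B_2\setminus A$ and sending each new element to a realization in $\mathcal{M}$ of its orbit-type over the previously placed elements; by the first preliminary this set of realizations is a single $G_\Delta$-orbit, infinite by the claim, so one can avoid the finitely many points of $\iota_1(B_1)$ used so far. Then $C:=\iota_1(B_1)\cup\iota_2(B_2)$ with its induced structure is a finite substructure of $\mathcal{M}$, the maps $\iota_1\colon B_1\to C$ and $\iota_2\colon B_2\to C$ are embeddings with $\iota_1f_1=\iota_2f_2$ and $\iota_1(B_1)\cap\iota_2(B_2)=\iota_1(A)$, so $C$ is a strong amalgam. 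Checking that the stepwise maps are genuine embeddings and that the realization sets are exactly the claimed orbits is routine given the first preliminary fact, so the orbit-infiniteness claim is the real content.
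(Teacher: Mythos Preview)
Your proof is correct and is organized around the same hub as the paper (condition (3)), with your $(1)\Leftrightarrow(3)$ argument matching the paper's Theorem~\ref{orbit categories} essentially verbatim: $\phi$ is always fully faithful and surjective on objects, so the question reduces to injectivity on objects, which is exactly the no-fixed-points condition. The difference lies in $(2)\Leftrightarrow(3)$. The paper (Lemma~\ref{strong amalgamation}) passes through the same intermediate condition ``$G_\Gamma$ has no \emph{finite} orbits outside $\Gamma$'' that you do, but it cites Cameron \cite[Section 2.7, (2.15)]{Ca} for the equivalence of that condition with strong amalgamation, whereas you supply both directions from scratch: a direct failure of strong amalgamation for $\neg(3)\Rightarrow\neg(2)$, and a one-point-at-a-time embedding construction for $(3)\Rightarrow(2)$. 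Your minimal-counterexample argument for ``no fixed points $\Rightarrow$ no finite orbits'' is valid but slightly more elaborate than necessary; the paper instead takes any finite $G_\Gamma$-orbit $\Sigma\subseteq\Omega\setminus\Gamma$, picks $s\in\Sigma$, and notes that $H=G_{\Gamma\cup(\Sigma\setminus\{s\})}$, being a subgroup of $G_\Gamma$, stabilizes $\Sigma$ and fixes $\Sigma\setminus\{s\}$ pointwise, hence fixes $s$ as well by injectivity---contradicting (3) in a single step with no induction. The trade-off is clear: your argument is fully self-contained and makes the amalgamation explicit, while the paper's is shorter by outsourcing the standard model-theoretic equivalence to the literature.
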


As an application, we consider the case that the action of $G$ on $\Omega$ is highly homogeneous, $k$ a commutative Noetherian ring, and $R = k[\Omega]$. We show that $\Age(\mathcal{M})$ satisfies the strong amalgamation property. Consequently, by Cameron's classification of highly homogeneous groups \cite{Ca76}, the above theorem and \cite[Theorem 2.2]{LPY}, we deduce that
\[
RG \Mod^{\dis} \simeq \Sh(\mathcal{A}^{\op}, J_{at}, \mathcal{R})
\]
where $J_{at}$ is the atomic Grothendieck topology and $\mathcal{R}$ is the structure sheaf with $\mathcal{R}(\Gamma) = k[\Gamma]$ for every finite subset $\Gamma$ of $\Omega$. Relying on this equivalence and the technique in \cite{NR}, we obtain the following result, generalizing \cite[Theorem 6.15]{NR} and \cite[Corollary 1.5]{LPY} simultaneously.

\begin{theorem} \label{thm 3}
Let $k$ and $R$ be as defined above. If the action of $G$ on $\Omega$ is highly homogeneous, then every finitely generated presheaf over the ringed site $(\mathcal{A}^{\op}, J_{at}, \mathcal{R})$ is Noetherian. Consequently, every finitely generated discrete $RG$-module is Noetherian.
\end{theorem}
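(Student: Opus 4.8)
The plan is to combine the equivalence $RG \Mod^{\dis} \simeq \Sh(\mathcal{A}^{\op}, J_{at}, \mathcal{R})$ established above with Cameron's classification \cite{Ca76} and the Gröbner-basis technique of \cite{NR}. I first dispose of the implication ``presheaf statement $\Rightarrow$ module statement'': a finitely generated discrete $RG$-module corresponds to a finitely generated sheaf $\mathcal{F}$, which admits a surjection $a(\mathcal{P}) \twoheadrightarrow \mathcal{F}$ from the sheafification of a finitely generated presheaf $\mathcal{P}$ (a finite sum of free presheaves on the generating sections). Since $a$ is exact and the unit $\mathcal{P} \to a(\mathcal{P})$ reflects the relevant chains, an ascending chain of subsheaves of $\mathcal{F}$ pulls back to an ascending chain of finitely generated subpresheaves of $\mathcal{P}$, which stabilizes by hypothesis, hence so does the original chain. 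It therefore remains to show that every finitely generated presheaf of $\mathcal{R}$-modules over $\mathcal{A}^{\op}$ is Noetherian; unravelling definitions, this says exactly that finitely generated modules over the polynomial $\mathcal{A}$-algebra $\mathcal{R}$, $\mathcal{R}(\Gamma) = k[\Gamma]$, are Noetherian, in the sense of \cite{NR} with $\mathcal{A}$ in place of $\OI$.

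Next I would invoke Cameron's classification: the closed highly homogeneous group $\Aut(\mathcal{M})$ is $\Sym(\Omega)$, the order-automorphism group of a dense linear order without endpoints, the automorphism group of the associated betweenness relation, the automorphism group of a circular order, or the automorphism group of the associated separation relation. Accordingly $\mathcal{A}$ is the category of finite sets, finite linear orders, finite betweenness structures, finite cyclic orders, or finite separation structures, with embeddings as morphisms. In the first two cases $\mathcal{A}$ is (equivalent to) $\FI$ or $\OI$ and $\mathcal{R}$ is the polynomial $\FI$- (resp.\ $\OI$-) algebra, so the required Noetherianity is covered by \cite[Theorem 6.15]{NR} and \cite[Corollary 1.5]{LPY}.

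For the betweenness and separation cases I would run a finite-index reduction. Here $\Aut(\mathcal{M})$ contains, respectively, the order-automorphism group of a dense linear order and the automorphism group of a circular order as an \emph{open} subgroup of index $2$, the nontrivial coset consisting of the order- (resp.\ orientation-) reversing maps; on the categorical side $\mathcal{A}$ is obtained from the smaller category by adjoining the reversing embeddings, so that each object acquires an automorphism of order $2$. Consequently a finitely generated presheaf over the larger site remains finitely generated when restricted to the smaller one, and an ascending chain of its subpresheaves over the larger site is in particular such a chain over the smaller one, hence stabilizes. (Equivalently: for an open normal subgroup $H \leqslant G$ of finite index, $\Res \colon RG \Mod^{\dis} \to RH \Mod^{\dis}$ is faithful and exact and sends finitely generated modules to finitely generated ones because $RG$ is finite free over $RH$; feed this through the equivalences above.) Thus the betweenness case reduces to the linear-order case and the separation case to the circular one.

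The remaining, and hardest, case is the circular order, where no finite-index reduction is available. Here I would adapt the Gröbner-basis argument of \cite{NR} directly to $\mathcal{A}$, the category of finite cyclic orders and embeddings. The monomials of the free presheaf generated at a finite cyclic order $\Gamma_0$ are the pairs consisting of an embedding $\Gamma_0 \hookrightarrow \Gamma$ and an exponent function on $\Gamma$, and the new combinatorial ingredient is that these are well-quasi-ordered under ``precompose and dominate'': choosing for each finite cyclic order a linear representative obtained by a cut, a subword embedding of linear representatives respecting the order on exponent labels automatically respects the cyclic order, so an infinite antichain of monomials would produce one of linearly ordered words, contradicting Higman's lemma. (Alternatively, the closing-up functor $\OI \to \mathcal{A}$ satisfies Sam--Snowden's property (F) \cite{SS2}, and one could instead invoke a coefficient-sensitive version of their transfer principle.) Granting the well-quasi-order, the rest of \cite{NR}'s argument carries over unchanged --- initial submodules are finitely generated, and the ascending chain condition for submodules follows from that for initial submodules by the standard degeneration --- settling the circular, and with the previous paragraph the separation, case. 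The main obstacle is precisely this last step: verifying that \cite{NR}'s machinery remains valid over $\mathcal{A}$ together with the varying coefficient algebra $\mathcal{R}$, i.e.\ that the pertinent monomials and their leading-term combinatorics are well-quasi-ordered and compatible with the category action.
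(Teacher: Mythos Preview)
Your overall architecture matches the paper's: reduce sheaves to presheaves, invoke Cameron's classification, and then reduce the five categories to the $\OI$ case already handled in \cite{NR}. The sheaf-to-presheaf reduction is essentially the paper's (minor quibble: the pulled-back subpresheaves of $\mathcal{P}$ need not themselves be finitely generated; what matters is that their ascending chain stabilizes because $\mathcal{P}$ is Noetherian).

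The substantive divergence is in how you treat the cyclic order case $\CI$. You propose either a bespoke Gr\"obner/Higman argument or a Sam--Snowden property~(F) transfer, and flag this as the main obstacle. The paper avoids this entirely by a uniform and much cheaper observation: $\OI$ is a subcategory of \emph{each} of $\BI$, $\CI$, $\SI$, and in every case one has a unique factorisation $\epsilon = \epsilon' \circ g$ of any morphism $\epsilon \in \C([m],[n])$ with $\epsilon' \in \OI([m],[n])$ and $g \in \Aut_{\C}([m])$. (For $\CI$ this says: every cyclic-order-preserving injection becomes order-preserving after a unique rotation of the source.) Hence the restriction of the free presheaf $P_n^{\C}$ to $\OI$ is a direct sum of $|\Aut_{\C}([n])|$ copies of $P_n^{\OI}$, so finite generation --- and therefore Noetherianity --- is preserved and reflected by restriction to $\OI$. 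This single argument disposes of $\BI$, $\CI$, and $\SI$ simultaneously, with no index-$2$ bookkeeping and no new well-quasi-order verification.

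Your ``closing-up functor $\OI \to \mathcal{A}$'' alternative is in fact exactly this idea; had you pursued it rather than the direct Gr\"obner route, you would have arrived at the paper's proof. The direct approach you sketch for $\CI$ is plausible and would likely go through, but it is unnecessary work.
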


The paper is organized as follows. In Section 2 we deduce some elementary facts on actions of groups on infinite sets. In Section 3 we use standard arguments on representation theory of topological groups to prove Theorem \ref{thm 1}. Relations between the orbit category $\mathcal{G}$ and the category $\mathcal{A}$ of finite substructures and embeddings is explored in Section 4, where we prove Theorem \ref{thm 2}. Applications and a proof of Theorem \ref{thm 3} are given in the final section.

\section{Actions on sets and orbits}

 From now on let $\Omega$ be an infinite set, and $\Sym(\Omega)$ the full permutation group on $\Omega$. Given a group $G$ together with a fixed group homomorphism $\rho: G \to \Sym(\Omega)$, we can define an action of $G$ on $\Omega$ via
\[
G \times \Omega \longrightarrow \Omega, \quad (g, x) \mapsto g \cdot x = \rho(g)(x).
\]

For $n \in \mathbb{N}$, define
\begin{itemize}
\item $\Omega^n$: the Cartesian product of $n$ copies of $\Omega$,

\item $\Omega^{(n)}$: the subset of $\Omega^n$ consisting of componentwise distinct $n$-tuples,

\item $\Omega^{\{n\}}$: the set of $n$-element subsets of $\Omega$.
\end{itemize}
The action of $G$ on $\Omega$ naturally induces actions of $G$ on these sets, so we define the following numerical invariants:
\begin{align*}
& f_{G, \Omega}: \mathbb{N} \to \mathbb{N} \cup \{ \infty\}, \quad n \mapsto \text{ the number of $G$-orbits in } \Omega^{\{n\}},\\
& F_{G, \Omega}: \mathbb{N} \to \mathbb{N} \cup \{ \infty\}, \quad n \mapsto \text{ the number of $G$-orbits in } \Omega^{(n)},\\
& F^{\ast}_{G, \Omega}: \mathbb{N} \to \mathbb{N} \cup \{ \infty\}, \quad n \mapsto \text{ the number of $G$-orbits in } \Omega^n,
\end{align*}
and call them \textit{growth rates} of the actions of $G$ on these sets. It is well known that they are nondecreasing, and satisfy the following relations:
\begin{align*}
& f_{G, \Omega} (n) \leqslant F_{G, \Omega} (n) \leqslant n! f_{G, \Omega}(n),\\
& F_{G, \Omega}^{\ast} (n) = \sum_{i=1}^n S(n, i) F_{G, \Omega} (i)
\end{align*}
where $S(n, i)$ is the Stirling number of the second kind. For more details, see \cite{Ca}.

Three cases are of great interest to people, and hence deserve special names.

\begin{definition}
We say that the action of $G$ on $\Omega$ is:
\begin{itemize}
\item \textit{highly transitive} if $F_{G, \Omega} \equiv 1$,

\item \textit{highly homogeneous} if $f_{G, \Omega} \equiv 1$,

\item \textit{oligomorphic} if $f_{G, \Omega} (n)$ is finite for each $n \in \mathbb{N}$.
\end{itemize}
\end{definition}

Clearly, the highly transitive property implies the highly homogeneous property, which in turn implies the oligomorphic property. Furthermore, if the action of $G$ on $\Omega$ is oligomorphic, then the values of $F^{\ast}_{G, \Omega}$ and $F_{G, \Omega}$ on each $n \in \mathbb{N}$ are finite.

Let $H$ be another group acting on $\Omega$. We say that $G$ and $H$ \textit{share the same orbits on $\Omega$} if for each $x \in \Omega$, one has $G \cdot x = H \cdot x$, where $G \cdot x$ is the $G$-orbit containing $x$. The following fact is elementary. For the convenience of the reader, we provide a self-contained proof.

\begin{lemma} \label{orbits of finite cartesian products}
Suppose that $G$ and $H$ act on $\Omega$, and fix $n \geqslant 1$. Then the following are equivalent:
\begin{enumerate}
  \item $G$ and $H$ share the same orbits on $\Omega^n$;
  \item $G$ and $H$ share the same orbits on $\Omega^{(n)}$;
  \item $G$ and $H$ share the same orbits on $\Omega^s$ for every $s \leqslant n$;
  \item $G$ and $H$ share the same orbits on $\Omega^{(s)}$ for every $s \leqslant n$.
\end{enumerate}
\end{lemma}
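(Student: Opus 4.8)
The lemma is elementary, and the plan is to close the cycle of implications $(1)\Rightarrow(2)\Rightarrow(4)\Rightarrow(3)\Rightarrow(1)$. Here $(3)\Rightarrow(1)$ is trivial (specialize to $s=n$), and in each of the remaining steps it suffices, by the evident symmetry between $G$ and $H$, to prove that every $G$-orbit on the relevant set is contained in an $H$-orbit.

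For $(1)\Rightarrow(2)$, I would simply note that $\Omega^{(n)}$ is a subset of $\Omega^n$ invariant under both actions, so for $v\in\Omega^{(n)}$ the $G$-orbit of $v$ computed in $\Omega^{(n)}$ coincides with $G\cdot v\subseteq\Omega^n$, and likewise for $H$; hence $(1)$ forces the two orbits of $v$ to agree. For $(2)\Rightarrow(4)$, fix $s\le n$, take $v=(x_1,\dots,x_s)\in\Omega^{(s)}$ and $g\in G$; since $\Omega$ is infinite we may choose $x_{s+1},\dots,x_n$ so that $u=(x_1,\dots,x_n)$ has pairwise distinct entries, and then $gu$ lies in the $G$-orbit of $u$, so $(2)$ produces $h\in H$ with $hu=gu$, whose restriction to the first $s$ coordinates gives $hv=gv$. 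For $(4)\Rightarrow(3)$, fix $s\le n$, take an arbitrary $v=(x_1,\dots,x_s)\in\Omega^s$ (repetitions allowed) and $g\in G$; letting $w=(y_1,\dots,y_r)$ be the tuple of distinct entries of $v$, one has $w\in\Omega^{(r)}$ with $r\le s\le n$, and since $g$ is injective $gw\in\Omega^{(r)}$ lies in the $G$-orbit of $w$, so $(4)$ gives $h\in H$ with $hy_k=gy_k$ for all $k$, and rewriting each $x_i$ as the relevant $y_k$ yields $hv=gv$. Combined with $(3)\Rightarrow(1)$ this completes the cycle.

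I expect no real obstacle here: the only ingredients that are not purely formal are the use of the infiniteness of $\Omega$ in $(2)\Rightarrow(4)$, which is exactly what allows one to extend a tuple of distinct entries to length $n$, and the bookkeeping of the equality pattern of a tuple in $(4)\Rightarrow(3)$, which lets one reduce an arbitrary tuple to one with distinct entries. Both are routine, so the write-up should be short.
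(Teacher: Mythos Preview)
Your proposal is correct and follows essentially the same route as the paper: the same cycle $(1)\Rightarrow(2)\Rightarrow(4)\Rightarrow(3)\Rightarrow(1)$, the same use of the infiniteness of $\Omega$ to pad a distinct $s$-tuple to a distinct $n$-tuple in $(2)\Rightarrow(4)$, and the same reduction to the tuple of distinct entries in $(4)\Rightarrow(3)$. The only cosmetic difference is that you phrase the arguments in terms of matching a given $g\in G$ by some $h\in H$ rather than in terms of two tuples in the same orbit, which is equivalent.
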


\begin{proof}
\textbf{(1) $\Rightarrow$ (2).} This is trivial since $\Omega^{(n)}$ is a $G$-subset of $\Omega^n$.

\medskip\noindent
\textbf{(2) $\Rightarrow$ (4).} Fix $s \leqslant n$, and let
\[
\mathbf x = (x_1, \dots, x_s), \qquad \mathbf y = (y_1,\dots, y_s)
\]
be two elements in $\Omega^{(s)}$. Since $\Omega$ is infinite, we can choose an $(n-s)$-tuple $\mathbf z$ whose entries are distinct and disjoint from $\{x_1, \dots, x_s\}$. Clearly, $(\mathbf x, \mathbf z) \in \Omega^{(n)}$.

If $\mathbf x$ and $\mathbf y$ lie in the same $G$-orbit in $\Omega^{(s)}$, we can pick $g \in G$ with $g \cdot \mathbf x = \mathbf y$, and set $\mathbf w = g \cdot \mathbf z$. Then $(\mathbf x, \mathbf z)$ and $(\mathbf y, \mathbf w)$ lie in the same $G$-orbit inside $\Omega^{(n)}$. By (2) they therefore lie in the same $H$-orbit, so some $h \in H$ sends $(\mathbf x,\mathbf z)$ to $(\mathbf y,\mathbf w)$; in particular $h \cdot \mathbf x = \mathbf y$. Thus $\mathbf x$ and $\mathbf y$ lie in the same $H$-orbit. The conclusion of (4) then follows from symmetry.

\medskip\noindent
\textbf{(4) $\Rightarrow$ (3).} Fix a certain $s \leqslant n$ and take an arbitrary
\[
\mathbf x = (x_1,\dots,x_s) \in \Omega^s.
\]
We need to show $G \cdot \mathbf x = H \cdot \mathbf x$; that is, for any $g \in G$, one can find a certain $h \in H$ such that $g \cdot \mathbf x = h \cdot \mathbf x$.

Let $\{x_{i_1},\dots,x_{i_\ell}\}$ be the set of distinct values occurring in $\mathbf x$ (listed in any order) and put
\[
\mathbf z = (x_{i_1},\dots,x_{i_\ell}) \in \Omega^{(\ell)}.
\]
By hypothesis (4), $G$ and $H$ share the same orbits on $\Omega^{(\ell)}$, so $G \cdot \mathbf z = H\cdot \mathbf z$. In particular, one can find a certain $h \in H$ such that $g \cdot \mathbf z = h \cdot \mathbf z$. But this happens if and only if $g \cdot \mathbf x = h \cdot \mathbf x$.

\medskip\noindent
\textbf{(3) $\Rightarrow$ (1).} This is trivial.
\end{proof}

\begin{remark}
The equivalence of (1) and (2) cannot be extended to $\Omega^{\{n\}}$: $\Sym(\mathbb{Q})$ and $\Aut(\mathbb{Q}, \leqslant)$ share the same orbits for each $\mathbb{Q}^{\{n\}}$, but they have distinct orbits on $\mathbb{Q}^2$.
\end{remark}

Recall that the action of $G$ on $\Omega$ induces the \textit{canonical relational structure} $\mathcal{M} = (\Omega, (R_i)_{i \in I})$. Explicitly, each $R_i$ is an equivalence relation in $\Omega^{n_i}$ such that two tuples $\mathbf x$ and $\mathbf y$ in $\Omega^{n_i}$ are contained in $R^{n_i}$ if and only if they lie in the same $G$-orbit. This is a homogeneous structure, meaning that embeddings between finite substructures of $\mathcal{M}$ extend to elements in $G$.\footnote{In literature, a homogeneous structure requires that isomorphisms between finite structures of $\mathcal{M}$ extend to automorphisms of $\mathcal{M}$. But as we shall see later, $G$ is a dense subgroup of $\Aut(\mathcal{M})$, so from the definition of permutation topology we can require these automorphisms to be contained in $G$. Furthermore, if every embedding can be extended, so is every isomorphism. Conversely, if every isomorphism can be extended, then every embedding $f: \Gamma \to \Sigma$ gives an isomorphism $\Gamma \cong f(\Gamma) \subseteq \Sigma$, and the extension of this isomorphism also extends $f$.} Clearly, two groups $G$ and $H$ acting on $\Omega$ induce the same canonical relational structure if and only if they share the same orbits on $\Omega^n$ for all $n \in \mathbb{N}$.

We give two immediate consequences of the above lemma.

\begin{corollary} \label{orbits on cartesian products}
Suppose that $G$ and $H$ act on $\Omega$. Then the following are equivalent:
\begin{enumerate}
\item $G$ and $H$ induce the same relational structure on $\Omega$;

\item for any $N \in \mathbb{N}$, there exists $n \geqslant N$ such that $G$ and $H$ share the same orbits on $\Omega^n$;

\item for any $N \in \mathbb{N}$, there exists $n \geqslant N$ such that $G$ and $H$ share the same orbits on $\Omega^{(n)}$.
\end{enumerate}
\end{corollary}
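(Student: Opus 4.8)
The plan is to reduce the whole corollary to Lemma \ref{orbits of finite cartesian products} together with the observation recorded just before it, namely that $G$ and $H$ induce the same canonical relational structure on $\Omega$ precisely when they share the same orbits on $\Omega^n$ for every $n \in \mathbb{N}$. Once this is isolated, the corollary is a purely formal bookkeeping of quantifiers and no new geometric input is needed.

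Concretely, I would argue as follows. For $(1) \Rightarrow (2)$: if $G$ and $H$ share orbits on every $\Omega^n$, then for any $N$ one may simply take $n = N$. For $(2) \Rightarrow (3)$: fix $N$, choose $n \geqslant N$ for which $G$ and $H$ share the same orbits on $\Omega^n$, and invoke the implication $(1) \Rightarrow (2)$ of Lemma \ref{orbits of finite cartesian products} for that particular $n$ to conclude that they share the same orbits on $\Omega^{(n)}$, which is exactly what $(3)$ asks for this $N$. For $(3) \Rightarrow (1)$: fix an arbitrary $m \in \mathbb{N}$, apply $(3)$ with $N = m$ to obtain some $n \geqslant m$ with $G$ and $H$ sharing the same orbits on $\Omega^{(n)}$, and then use the implication $(2) \Rightarrow (3)$ of Lemma \ref{orbits of finite cartesian products} to transfer this downward, getting that $G$ and $H$ share the same orbits on $\Omega^s$ for every $s \leqslant n$, in particular on $\Omega^m$. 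Since $m$ was arbitrary, they share the same orbits on all $\Omega^n$, hence induce the same relational structure.

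There is essentially no obstacle here; the only point requiring a little care is keeping the quantifiers aligned, and recognizing that the ``infinitely often'' hypothesis in $(2)$ and $(3)$ is genuinely what is needed — the orbit data on a bounded range of arities would not suffice to pin down $\mathcal{M}$. All the real content — namely that agreement of orbits at a single arity $n$ (whether on $\Omega^n$ or on $\Omega^{(n)}$) propagates to every smaller arity — has already been extracted in Lemma \ref{orbits of finite cartesian products}, so the corollary merely repackages that with the ``for cofinally many $n$'' quantifier.
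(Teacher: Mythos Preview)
Your argument is correct and is exactly the unpacking the paper has in mind: the corollary is stated there without proof as an ``immediate consequence'' of Lemma~\ref{orbits of finite cartesian products} together with the remark that inducing the same canonical relational structure means sharing orbits on every $\Omega^n$. Your cycle $(1)\Rightarrow(2)\Rightarrow(3)\Rightarrow(1)$, using the implications $(1)\Rightarrow(2)$ and $(2)\Rightarrow(3)$ of that lemma at the appropriate places, is precisely how one makes this immediate.
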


In some situations we can use the growth rates to determine whether two groups induce the same relational structure.

\begin{corollary} \label{growth functions}
Suppose that the action of $G$ on $\Omega$ is oligomorphic, and let $H$ be a subgroup of $G$. Then the following are equivalent:
\begin{enumerate}
\item $G$ and $H$ induce the same relational structure on $\Omega$;

\item for any $N \in \mathbb{N}$, there exists $n \geqslant N$ such that $F_{G, \Omega} (n) = F_{H, \Omega} (n)$;

\item for any $N \in \mathbb{N}$, there exists $n \geqslant N$ such that $F^{\ast}_{G, \Omega} (n) = F^{\ast}_{H, \Omega} (n)$.
\end{enumerate}
\end{corollary}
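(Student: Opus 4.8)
The plan is to reduce everything to Corollary \ref{orbits on cartesian products} by translating the combinatorial condition ``$G$ and $H$ share the same orbits on $\Omega^{(n)}$'' (resp.\ on $\Omega^n$) into the numerical equality $F_{G,\Omega}(n) = F_{H,\Omega}(n)$ (resp.\ $F^{\ast}_{G,\Omega}(n) = F^{\ast}_{H,\Omega}(n)$). Two structural facts drive the argument. First, since $H \leqslant G$, every $H$-orbit on $\Omega^{(n)}$ or on $\Omega^n$ is contained in a unique $G$-orbit, so each $G$-orbit decomposes as a disjoint union of $H$-orbits; in particular $F_{H,\Omega}(n) \geqslant F_{G,\Omega}(n)$ and $F^{\ast}_{H,\Omega}(n) \geqslant F^{\ast}_{G,\Omega}(n)$ for every $n$. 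Second, oligomorphy of $G$ guarantees that $F_{G,\Omega}(n)$ and $F^{\ast}_{G,\Omega}(n)$ are finite for each $n$, as noted after the definition of oligomorphic actions.

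Next I would prove the key equivalence for a fixed $n$: $G$ and $H$ share the same orbits on $\Omega^{(n)}$ if and only if $F_{G,\Omega}(n) = F_{H,\Omega}(n)$. The forward direction is immediate. For the converse, write the decomposition of $\Omega^{(n)}$ into $G$-orbits as $O_1 \sqcup \cdots \sqcup O_m$ with $m = F_{G,\Omega}(n) < \infty$; each $O_j$ is nonempty and $H$-invariant, hence a disjoint union of at least one $H$-orbit, so $F_{H,\Omega}(n)$ equals the sum over $j$ of the number of $H$-orbits inside $O_j$, which is $\geqslant m$, with equality precisely when each $O_j$ is a single $H$-orbit, i.e.\ when $G$ and $H$ share the same orbits on $\Omega^{(n)}$. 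The identical counting argument, with $\Omega^{(n)}$ replaced by $\Omega^n$, shows that $G$ and $H$ share the same orbits on $\Omega^n$ if and only if $F^{\ast}_{G,\Omega}(n) = F^{\ast}_{H,\Omega}(n)$.

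Finally I would assemble the corollary. Condition (2) of the present statement says that for every $N$ there is some $n \geqslant N$ with $F_{G,\Omega}(n) = F_{H,\Omega}(n)$, which by the fixed-$n$ equivalence just established is exactly condition (3) of Corollary \ref{orbits on cartesian products}; condition (3) here is, via the analogous equivalence for $\Omega^n$, exactly condition (2) of Corollary \ref{orbits on cartesian products}; and condition (1) here is literally condition (1) there. Hence all three are equivalent.

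I do not expect a genuine obstacle here; the argument is essentially bookkeeping on top of the earlier lemma and corollary. The one point that must be handled with care is that the step collapsing a refinement of partitions to an equality of partitions requires the finiteness of $F_{G,\Omega}(n)$ and $F^{\ast}_{G,\Omega}(n)$ — this is precisely where the oligomorphy hypothesis enters, and without it equal cardinalities of the two orbit sets would not force the orbit partitions to coincide.
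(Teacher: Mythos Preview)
Your proposal is correct and follows essentially the same route as the paper: both arguments use oligomorphy to ensure $F_{G,\Omega}(n)$ (resp.\ $F^{\ast}_{G,\Omega}(n)$) is finite, observe that each $G$-orbit is a disjoint union of $H$-orbits so that equality of the counts forces equality of the orbit partitions, and then appeal to Corollary~\ref{orbits on cartesian products} to conclude. The paper's write-up is more terse (it proves $(2)\Rightarrow(1)$ directly and says the $(3)\Rightarrow(1)$ case is similar), but the content is the same.
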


\begin{proof}
Clearly, (1) implies both (2) and (3). We show that (2) implies (1), and a similar argument gives the implication $(3) \Rightarrow (1)$.

Given $s \in \mathbb{N}$, one can find a certain $n \geqslant s$ such that $F_{G, \Omega} (n) = F_{H, \Omega} (n)$. Since the action of $G$ on $\Omega$ is oligomorphic, $F_{G, \Omega}(n)$ is finite. Since each $G$-orbit is a disjoint union of $H$-orbits, the equality forces that $G$ and $H$ share the same orbits on $\Omega^{(n)}$, and hence the same orbits on $\Omega^{(s)}$. Since $s$ is arbitrary, we conclude that $G$ and $H$ induce the same relational structure on $\Omega$.
\end{proof}

Now we introduce a topological structure on $G$. Equip $\Omega$ with the discrete topology. Given a subset $\Gamma \subseteq \Omega$, we set
\[
G_{\Gamma} = \Stab_G(\Gamma) = \{ g \in G \mid g \cdot x = x, \, \forall x \in \Gamma \}.
\]
In particular, if $\Gamma = \{ x \}$, we also write $G_x$ for $\Stab_G( \{x\})$. A subset $U \subseteq G$ is open in $G$ if for any $g \in U$, there exists a certain finite set $\Gamma \subseteq \Omega$ such that $g G_{\Gamma} \subseteq U$. If we regard elements in $G$ as maps from $\Omega$ to itself, then this topology on $G$ is precisely the \textit{compact-open topology}, or pointwise convergence topology. With respect to this topology $G$ becomes a topological group, and the stabilizer subgroups of finite subsets of $\Omega$ form a neighborhood basis of the identity. In particular, if $\rho: G \to \Sym(\Omega)$ is injective, then $G$ is totally disconnected. For details, see \cite{Ca}.

The topological structure on $G$ gives us more statements equivalent to those in Corollaries \ref{orbits on cartesian products} and \ref{growth functions}.

\begin{proposition} \label{topological characterization}
Let $H$ be a subgroup of $G$. Then the following are equivalent:
\begin{enumerate}
\item $G$ and $H$ induce the same relational structure on $\Omega$;

\item $H$ is a dense subgroup of $G$;

\item For every $n \in \mathbb{N}$ and every tuple $\mathbf x \in \Omega^n$, the natural map of $H$-sets
    \[
      H/(H \cap G_{\mathbf x}) \cong HG_{\mathbf x}/G_{\mathbf x} \longrightarrow G/G_{\mathbf x}
    \]
induced by inclusion $H \hookrightarrow G$ is surjective.
\end{enumerate}
\end{proposition}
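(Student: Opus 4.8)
The plan is to reduce all three conditions to statements about orbits of tuples, after which the equivalences are essentially bookkeeping. The key observation, which I would record first, is that for any tuple $\mathbf x \in \Omega^n$ the orbit map $g G_{\mathbf x} \mapsto g \cdot \mathbf x$ identifies the coset space $G/G_{\mathbf x}$ with the $G$-orbit $G \cdot \mathbf x \subseteq \Omega^n$, while the standard bijection of left coset spaces $H/(H \cap G_{\mathbf x}) \cong HG_{\mathbf x}/G_{\mathbf x}$ attached to the subgroups $H$ and $G_{\mathbf x}$ is identified with the $H$-orbit $H \cdot \mathbf x$ (here $H \cap G_{\mathbf x}$ is exactly the stabilizer of $\mathbf x$ in $H$). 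Under these identifications the map appearing in (3) is nothing but the inclusion $H \cdot \mathbf x \hookrightarrow G \cdot \mathbf x$, so it is surjective precisely when $H \cdot \mathbf x = G \cdot \mathbf x$. Hence (3), asserted for every $n$ and every $\mathbf x$, is exactly the statement that $G$ and $H$ share the same orbits on each $\Omega^n$, which by the definition of the canonical relational structure $\mathcal{M}$ (equivalently by Corollary \ref{orbits on cartesian products}) is precisely condition (1). This gives (1) $\Leftrightarrow$ (3).

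For (1) $\Leftrightarrow$ (2) I would unwind the permutation topology. Since the left cosets $gG_{\Gamma}$ with $\Gamma \subseteq \Omega$ finite form a basis for the topology on $G$, the subgroup $H$ is dense if and only if $gG_{\Gamma} \cap H \neq \emptyset$ for every $g \in G$ and every finite $\Gamma$. Enumerating $\Gamma = \{x_1, \dots, x_n\}$ and setting $\mathbf x = (x_1, \dots, x_n) \in \Omega^{(n)}$, the condition $gG_{\Gamma} \cap H \neq \emptyset$ says that some $h \in H$ satisfies $g^{-1}h \in G_{\Gamma}$, i.e. $h \cdot \mathbf x = g \cdot \mathbf x$, i.e. $g \cdot \mathbf x \in H \cdot \mathbf x$. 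Letting $g$ and $\Gamma$ vary over all possibilities, density of $H$ is thus equivalent to $G \cdot \mathbf x = H \cdot \mathbf x$ for all $\mathbf x \in \Omega^{(n)}$ and all $n$, that is, to $G$ and $H$ sharing the same orbits on every $\Omega^{(n)}$; by Lemma \ref{orbits of finite cartesian products} this is once more condition (1).

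Since the argument is purely formal, I do not expect a genuine obstacle. The two points that deserve a line of care are: first, that it suffices to test density on the basic cosets $gG_{\Gamma}$ rather than merely on neighborhoods of the identity — this is legitimate because $G$ is a topological group and every nonempty open set is a union of such cosets; and second, the free passage between tuples with distinct entries (which arise from enumerating finite subsets $\Gamma$) and arbitrary tuples $\mathbf x \in \Omega^n$, which is exactly what the equivalences (1)--(4) in Lemma \ref{orbits of finite cartesian products} supply.
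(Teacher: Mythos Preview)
Your proposal is correct and follows essentially the same approach as the paper: the paper proves the cycle $(2)\Rightarrow(1)\Rightarrow(3)\Rightarrow(2)$ using exactly the orbit-map identification $G/G_{\mathbf x}\cong G\cdot\mathbf x$ for $(1)\Rightarrow(3)$ and the basic-coset characterization of density for the remaining implications, whereas you organize the same ingredients as $(1)\Leftrightarrow(3)$ and $(1)\Leftrightarrow(2)$. The only cosmetic difference is that you pass through tuples with distinct entries and then invoke Lemma~\ref{orbits of finite cartesian products}, while the paper works directly with arbitrary tuples; the content is the same.
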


\begin{proof}
We prove the cycle of implications $(2)\Rightarrow(1)\Rightarrow(3)\Rightarrow(2)$.

\medskip

\noindent\textbf{(2) $\Rightarrow$ (1).}
Fix $n \geqslant 1$, a tuple $\mathbf x = (x_1, \dots, x_n) \in \Omega^n$, and $g \in G$. We need to find a certain $h \in H$ such that $g \cdot \mathbf x = h \cdot \mathbf x$. Consider $\Gamma = \{a_1,\dots,a_n\} \subseteq \Omega$. Since $H$ is dense in $G$, $H \cap g G_{\Gamma}$ is nonempty. It is easy to see that any element in this intersection serves as the desired $h$.

\medskip

\noindent\textbf{(1) $\Rightarrow$ (3).}
Identify the $G$-set $G/G_{\mathbf x}$ with $G \cdot \mathbf x$ via $g G_{\mathbf x} \mapsto g \cdot \mathbf x$, and similarly identify the $H$-set $H/ (H \cap G_{\mathbf x})$ with $H \cdot \mathbf x$. The inclusion map $H \hookrightarrow G$ induces a natural $H$-equivariant map
\[
H / (H \cap G_{\mathbf x}) \longrightarrow G/G_{\mathbf x},
\]
which under the identifications above is exactly the inclusion $H \cdot \mathbf x \hookrightarrow G \cdot \mathbf x$. Since $H \cdot \mathbf x = G \cdot \mathbf x$, the induced map is surjective.

\medskip

\noindent\textbf{(3) $\Rightarrow$ (2).}
It suffices to show that $g G_{\Gamma} \cap H \neq \emptyset$ for every finite subset $\Gamma$ of $\Omega$ and every $g \in G$. Enumerate $\Gamma$ as a tuple $\mathbf x  \in \Omega^n$ and note that $G_{\Gamma} = G_{\mathbf x}$. Since the map
\[
H/ (H \cap G_{\mathbf x}) \longrightarrow G / G_{\mathbf x}
\]
is surjective, the coset $g G_{\mathbf x} \in G/G_{\mathbf x}$ has a preimage $h (H \cap G_{\mathbf x})$ with $h \in H$. But $h (H \cap G_{\mathbf x})$ is sent to $h G_{\mathbf x}$ via this map, so $h G_{\mathbf x} = g G_{\mathbf x}$, and hence $h \in gG_{\mathbf x} \cap H$. Thus the intersection is indeed nonempty.
\end{proof}

\begin{remark}
In the situation that $G$ is subgroup of $\Sym(\Omega)$, the implication $(1) \Rightarrow (2)$ follows from the well known fact that $G$ is a dense subgroup of $\Aut(\mathcal{M})$, the automorphism group of the canonical relational structure. Indeed, by (1) and this fact, both $H$ and $G$ are dense subgroups of $\Aut(\mathcal{M})$, so $H$ is also dense in $G$. For details, please refer to \cite[Section 2.4]{Ca}.
\end{remark}

\section{Restriction and equivalences}

Throughout this section let $R$ be a commutative ring equipped with the discrete topology, and suppose that elements of $G$ act on $R$ continuously as ring automorphisms. Let $RG$ be the skew group ring whose multiplication is determined by
\[
(ag) \cdot (bh) = (a g(b)) gh
\]
for $a, b \in R$ and $g, h \in G$.

\begin{example}
Let $k$ be a commutative ring, $\Phi$ a $G$-subset of $\Omega$, and $R = k[\Phi]$ the polynomial ring whose indeterminates are elements in $\Phi$. Then the natural action of $G$ on $R$ is continuous. Indeed, for a polynomial $P \in R$ with indeterminates $x_1, \ldots, x_n$, one has
\[
\Stab_G(P) \supseteq (G_{x_1} \cap \ldots \cap G_{x_n}),
\]
a finite intersection of open subgroups, so $\Stab_G(P)$ is also an open subgroup of $G$.
\end{example}

\begin{definition}
An $RG$-module $V$ is \textit{discrete} if $V$ is equipped with the discrete topology and the action of $G$ on it is continuous, or equivalently, for any $v \in V$, the stabilizer subgroup
\[
G_v = \Stab_G(v) = \{ g \in G \mid g \cdot v = v \}
\]
is an open subgroup of $G$.
\end{definition}

Denote the category of all discrete $RG$-modules by $RG \Mod^{\dis}$. It is a Grothendieck category with enough injectives. Moreover, the family
\[
\{ R(G/H) \mid H \leqslant G \text{ is open} \}
\]
is a set of generators of this category, where $R(G/H)$ is the free $R$-module with a basis consisting of left cosets $gH \in G/H$.

Given a subgroup $H$ of $G$ and a discrete $RG$-module $V$, we can view $V$ as an $RH$-module, and in this circumstance it is a discrete $kH$-module since
\[
\Stab_H(v) = \Stab_G(v) \cap H, \quad \forall v \in V
\]
is an open subgroup of $H$. Therefore, the classical restriction functor from the category of all $RG$-modules to the category of all $RH$-modules induces a restriction functor
\[
\Res: RG \Mod^{\dis} \longrightarrow RH \Mod^{\dis}.
\]

The main goal of this section is to show that the above restriction functor induces an equivalence if and only if $H$ is a dense subgroup of $G$, or equivalently, if and only if $G$ and $H$ induce the same relational structure on $\Omega$.

\begin{lemma} \label{fully faithful}
Let $H$ be a dense subgroup of $G$. Then the restriction functor
\[
\Res: RG \Mod^{\dis} \longrightarrow RH \Mod^{\dis}
\]
is fully faithful.
\end{lemma}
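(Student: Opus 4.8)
The plan is to exhibit, for each pair of discrete $RG$-modules $V$ and $W$, a natural inverse to the map $\Hom_{RG}(V, W) \to \Hom_{RH}(\Res V, \Res W)$. Since this map is always injective (an $RG$-homomorphism that vanishes after restriction already vanishes), the whole content is surjectivity: every $RH$-linear map $f \colon V \to W$ that is $H$-equivariant is automatically $G$-equivariant. So I would fix such an $f$, fix $v \in V$ and $g \in G$, and try to show $f(g \cdot v) = g \cdot f(v)$.

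The key step is a density/continuity argument at the level of a single element. Because $V$ is discrete, $\Stab_G(v)$ is open, hence contains $G_{\Gamma}$ for some finite $\Gamma \subseteq \Omega$; similarly $\Stab_G(f(v))$ contains $G_{\Sigma}$ for some finite $\Sigma$. Put $\Delta = \Gamma \cup \Sigma$, so that $G_{\Delta}$ fixes both $v$ and $f(v)$. Now $gG_{\Delta}$ is a basic open neighborhood of $g$ in $G$, so by density of $H$ there is $h \in H \cap gG_{\Delta}$; write $h = g u$ with $u \in G_{\Delta}$. Then $h \cdot v = g u \cdot v = g \cdot v$ because $u$ fixes $v$, and likewise $h \cdot f(v) = g \cdot f(v)$. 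Using $H$-equivariance of $f$ we get
\[
f(g \cdot v) = f(h \cdot v) = h \cdot f(v) = g \cdot f(v),
\]
which is exactly what we want. Since $v$ and $g$ were arbitrary, $f$ is $RG$-linear, proving surjectivity and hence full faithfulness.

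The main obstacle — really the only place any care is needed — is making sure the finite set $\Delta$ that "approximates" the action is chosen so that the approximating element $h$ agrees with $g$ simultaneously on $v$ and on $f(v)$; taking the union of the two stabilizer-witnesses handles this cleanly, and this is where the hypothesis that $H$ is dense (equivalently, by Proposition \ref{topological characterization}, that $G$ and $H$ induce the same relational structure on $\Omega$) enters in an essential way. Everything else is formal: $R$-linearity of $f$ is untouched by restriction, naturality of the bijection $\Hom_{RG}(V, W) \xrightarrow{\sim} \Hom_{RH}(\Res V, \Res W)$ in $V$ and $W$ is immediate since the inverse is "the same underlying map," and no hypothesis on $R$ beyond the standing one (discrete topology, continuous $G$-action by ring automorphisms) is needed.
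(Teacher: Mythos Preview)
Your proof is correct and follows essentially the same approach as the paper: both arguments fix $v$ and $g$, use density of $H$ to find $h\in H$ in a small enough open coset of $g$ so that $h$ acts on $v$ and $f(v)$ exactly as $g$ does, and then invoke $H$-equivariance of $f$. The only cosmetic difference is that the paper packages this as ``two continuous maps $G\to W$ agreeing on a dense subset must agree everywhere'' before unpacking it, whereas you go straight to the explicit stabilizer computation.
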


\begin{proof}
The restriction functor is clearly faithful, so we only need to show that it is a full functor. Explicitly, given $V, W \in RG \Mod^{\dis}$, we want to show that the natural map
\[
\Hom_{RG}(V, W) \longrightarrow \Hom_{RH}(\Res V, \Res W)
\]
is surjective. We prove this by checking that any $H$-equivariant map $f : \Res V \to \Res W$ is also $G$-equivariant.

Fix $v \in V$, and define two maps
\begin{align*}
& \varphi: G \to W, \quad g \mapsto f(g \cdot v);\\
& \psi: G \to W, \quad g \mapsto g \cdot f(v).
\end{align*}
Since $G$ acts on $V$ and $W$ continuously, the orbit maps $g \mapsto g \cdot v$ and $g \mapsto g \cdot f(v)$ are continuous. The continuity of $f$ is trivial since both $V$ and $W$ are equipped with the discrete topology. Thus $\varphi$ and $\psi$ are continuous maps.

Since $f$ is $H$-equivariant, we have $\varphi(h) = \psi(h)$ for all $h \in H$. But $H$ is dense in $G$ and $W$ is discrete; the continuity of $\varphi$ and $\psi$ implies that $\varphi = \psi$ on $G$. Indeed, for any $g \in G$, $\{\varphi(g) \}$ and $\{ \psi(g) \}$ are open sets in $W$, so
\[
U = \varphi^{-1}(\{\varphi(g)\}) \cap \psi^{-1}(\{\psi(g)\})
\]
is an open neighborhood of $g$ in $G$. By density of $H$, we can find a certain $h \in H \cap U$, so
\[
\varphi(g) = \varphi(h) = \psi(h) = \psi(g).
\]
But this means that $f(g \cdot v) = g \cdot f(v)$ for all $g \in G$; that is, $f$ is $G$-equivariant at $v \in V$. But $v$ is arbitrarily chosen in $V$, so $f$ is $G$-equivariant everywhere.
\end{proof}

The following lemma states a standard fact of topological group theory.

\begin{lemma} \label{intersection of open subgroups}
Let $T$ be a topological group, and let $T'$ be a dense subgroup of $T$. Then every open subgroup $L \leqslant T'$ is of the form
\[
L = T' \cap K
\]
for some open subgroup $K \leqslant T$.
\end{lemma}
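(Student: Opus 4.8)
The plan is to take $L \leqslant T'$ open and produce the open subgroup $K \leqslant T$ by passing to the closure: set $K = \overline{L}$, the closure of $L$ taken inside $T$. First I would argue that $K$ is a subgroup of $T$; this is a standard fact, since the closure of a subgroup of a topological group is again a subgroup (the continuity of multiplication and inversion let one push the subgroup relations through limits). Next I would show $K$ is open in $T$. Here the key observation is that $L$, being open in $T'$, contains $T' \cap W$ for some open neighborhood $W$ of the identity in $T$; since $T'$ is dense, $L$ is dense in $T' \cap W$, hence $W \subseteq \overline{T' \cap W} \subseteq \overline{L} = K$, so $K$ contains an open neighborhood of the identity, and a subgroup containing an open neighborhood of the identity is open.

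The crux is then to verify $L = T' \cap K$. The inclusion $L \subseteq T' \cap K$ is immediate since $L \subseteq T'$ and $L \subseteq \overline{L} = K$. For the reverse inclusion I would use that open subgroups are closed: since $K$ is open in $T$, it is also closed in $T$, so $L \subseteq K$ already gives $\overline{L} \subseteq K$, but more usefully, $L$ being open in $T'$ is also closed in $T'$, and I want to see that $T' \cap K$ cannot be strictly larger than $L$. The clean way: $T' \cap K$ is a subgroup of $T'$ containing the open subgroup $L$, so it is a union of cosets of $L$ in $T'$; suppose $x \in (T' \cap K) \setminus L$. Then $xL$ is an open neighborhood of $x$ in $T'$ disjoint from $L$, hence $\overline{L}^{T'}$ — the closure of $L$ in $T'$ — does not contain $x$. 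But $x \in K = \overline{L}^{T}$, and I claim $\overline{L}^{T} \cap T' = \overline{L}^{T'}$: indeed $L$ is closed in $T'$ (being open), so $\overline{L}^{T'} = L$, whence $\overline{L}^{T} \cap T' = L$, contradicting $x \notin L$. This last claim, that $\overline{L}^{T} \cap T' = L$ when $L$ is open (hence closed) in $T'$, is the one subtle point and deserves a short justification: any point of $T'$ in the closure of $L$ in $T$ is in particular a limit of points of $L$ within $T'$, hence lies in the closure of $L$ in $T'$, which is $L$ itself.

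The step I expect to be the main obstacle — or at least the one requiring the most care — is precisely the interaction between closures taken in $T$ versus in $T'$, i.e.\ justifying that restricting the closure of $L$ from $T$ back down to $T'$ recovers exactly $L$. Everything else (closure of a subgroup is a subgroup, a subgroup with nonempty interior is open, open subgroups are closed) is routine topological group theory that I would invoke without belaboring. I would organize the write-up as: (i) $K := \overline{L}$ is a subgroup; (ii) $K$ is open in $T$; (iii) $L \subseteq T' \cap K$ trivially; (iv) $T' \cap K \subseteq L$ via the coset argument combined with the closure-restriction identity; and conclude $L = T' \cap K$ with $K$ open, as required.
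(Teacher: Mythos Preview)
Your approach is essentially identical to the paper's: set $K=\overline{L}$, show it is open in $T$ via density of $T'$, and recover $L=T'\cap K$ from the subspace identity $\overline{L}^{\,T}\cap T'=\overline{L}^{\,T'}=L$ (using that $L$, being open in $T'$, is closed in $T'$). Two small remarks: your sentence ``$L$ is dense in $T'\cap W$'' is a slip---what you need (and what your conclusion actually uses) is that $T'\cap W$ is dense in $W$; and the coset detour in step (iv) is unnecessary, since the subspace-closure identity already gives $T'\cap K=L$ directly.
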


\begin{proof}
Since $L$ is an open subgroup of $T'$, there exists an open subset $W \subseteq T$ such that
\[
L = T' \cap W.
\]
Let $K$ be the closure of $L$ in $T$, which is also a subgroup of $T$. We claim that $K$ is open. Indeed, we have
\[
K = \overline{L} = \overline{W \cap T'} = \overline{W},
\]
where the last equality holds as $T'$ is dense in $T$ and $W$ is open in $T$. But $\overline{W}$ contains the open set $W$, so $K$ is an open subgroup of $T$. Consequently,
\[
T' \cap K = T' \cap \overline{L} = \overline{L}^{\,T'} = L,
\]
where $\overline{L}^{\,T'}$ denotes the closure of $L$ inside $T'$, which equals $L$ since $L$ as an open subgroup of $T'$ is also closed.
\end{proof}

\begin{proposition} \label{essential surjectivity}
Let $H$ be a dense subgroup of $G$. Then the restriction functor
\[
\Res: RG \Mod^{\dis} \longrightarrow RH \Mod^{\dis}
\]
is essentially surjective.
\end{proposition}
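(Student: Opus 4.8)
The goal is to show that, when $H$ is dense in $G$, every discrete $RH$-module $W$ is isomorphic to $\Res V$ for some discrete $RG$-module $V$. The plan is to construct such a $V$ directly, taking the same underlying $R$-module as $W$ and extending the $H$-action to a $G$-action. Given $w \in W$, discreteness of $W$ makes $\Stab_H(w)$ an open subgroup of $H$, so by Lemma \ref{intersection of open subgroups} there is an open subgroup $K \leqslant G$ with $\Stab_H(w) = H \cap K$. For $g \in G$ the coset $gK$ is open and $H$ is dense, hence $H \cap gK \neq \emptyset$; I would pick any $h$ in it and set $g \cdot w := h \cdot w$.

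The first thing to settle is that this recipe is unambiguous, which I would package as a flexibility statement: for \emph{any} open subgroup $K' \leqslant G$ with $H \cap K' \subseteq \Stab_H(w)$, any two elements $h, h' \in H \cap gK'$ satisfy $h^{-1} h' \in H \cap K' \subseteq \Stab_H(w)$, so $h \cdot w = h' \cdot w$; and replacing two admissible subgroups by their intersection shows that the resulting element is independent of $K'$ as well. Granting this, the $R$-module axioms become routine: additivity follows by computing $g \cdot w_1$, $g \cdot w_2$ and $g \cdot (w_1 + w_2)$ with the common admissible subgroup $K_1 \cap K_2$, and the skew relation $g \cdot (rw) = (g \cdot r)(g \cdot w)$ follows by using $K' = K \cap \Stab_G(r)$, which is open precisely because the $G$-action on the discrete ring $R$ is continuous, together with the observation that $h \in gK'$ forces $h \cdot r = g \cdot r$. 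The extended action is continuous: if $\Stab_H(w) = H \cap K$, then for $g \in K$ one may choose the representative $h$ inside $H \cap K = \Stab_H(w)$, so $g \cdot w = w$; thus $K \subseteq \Stab_G(w)$ and $\Stab_G(w)$ is open. Finally, restricting the new action to $H$ recovers the original $H$-action (for $h_0 \in H$ one may take $h = h_0$), and the $R$-module structure is unchanged, so $\Res V = W$.

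I expect the main (and essentially only) obstacle to be the verification that this recipe is genuinely a $G$-action, i.e. that $(g_1 g_2) \cdot w = g_1 \cdot (g_2 \cdot w)$. The point is to keep track of how the stabilizer moves: if $h_2 \in H \cap g_2 K$ represents $g_2 \cdot w$, then $\Stab_H(g_2 \cdot w) = h_2 (H \cap K) h_2^{-1} = H \cap (h_2 K h_2^{-1})$, so $g_1 \cdot (g_2 \cdot w)$ may be computed using the conjugated open subgroup $h_2 K h_2^{-1}$; picking $h_1 \in H \cap (g_1 h_2 K h_2^{-1})$ gives $g_1 \cdot (g_2 \cdot w) = (h_1 h_2) \cdot w$, and a short coset manipulation shows $h_1 h_2 \in H \cap g_1 g_2 K$, which is a legitimate representative for $(g_1 g_2) \cdot w$. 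This bookkeeping, combined with the flexibility statement, is the technical heart; everything else is formal.

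A cleaner alternative, available because full faithfulness of $\Res$ has already been established in Lemma \ref{fully faithful}, runs through generators. Density of $H$ in $G$ makes $H$ act transitively on each $G/K$ with point-stabilizer $H \cap K$, so $\Res R(G/K) \cong R(H/(H \cap K))$; together with Lemma \ref{intersection of open subgroups} this shows that every generator $R(H/L)$ of $RH \Mod^{\dis}$ is $\Res$ of a generator of $RG \Mod^{\dis}$. Choosing a presentation of $W$ by such generators, lifting the presenting map along the fully faithful functor $\Res$, forming the cokernel $V$ in the Grothendieck category $RG \Mod^{\dis}$, and using that $\Res$ is exact (it does not change underlying $R$-modules) then yields $\Res V \cong W$.
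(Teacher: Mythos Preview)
Your primary argument is correct and follows essentially the same route as the paper: both extend the $H$-action on $W$ to a $G$-action on the same underlying $R$-module using density and Lemma~\ref{intersection of open subgroups}. The only cosmetic difference is that the paper phrases the construction in terms of nets $(h_i)\to g$ and eventual constancy, whereas you pick a single representative $h\in H\cap gK$; since any net converging to $g$ eventually enters the open coset $gK$, these are the same computation. Your bookkeeping for multiplicativity (conjugating the admissible subgroup by $h_2$ and using $h_2K=g_2K$) is exactly what the net formulation hides, and your treatment of the skew relation and continuity matches the paper's.

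Your second approach, via generators and the already-established full faithfulness, is a genuine alternative not pursued in the paper. It is slicker in that it avoids the elementwise extension entirely, trading it for the abstract input that $\Res$ is exact, fully faithful, and hits every generator up to isomorphism; the cost is that it relies on Lemma~\ref{fully faithful} rather than being self-contained, and it produces $V$ only up to isomorphism rather than on the nose.
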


\begin{proof}
We prove the conclusion by showing that for any discrete $RH$-module $V$, one can extend the continuous action of $H$ on $V$ uniquely to a continuous action of $G$ on $V$.

Take $v \in V$ and $g \in G$. Note that the stabilizer subgroup $H_v = \{ h \in H \mid h \cdot v = v\}$ is open in $H$, so by Lemma \ref{intersection of open subgroups}, one can find an open subgroup $K \leqslant G$ with $H_v = K \cap H$. Note that here we cannot take $K$ to be $G_v$ as we have not shown that $G_v$ is open.

Since $H$ is dense in $G$, there exists a net $(h_i)_{i\in I}$ in $H$ with $h_i \to g$. We claim that the net $(h_i \cdot v)$ in $V$ is eventually constant. Indeed, because the net $(h_i)$ is Cauchy, for the neighbourhood $K$ of the identity in $G$, one can find an index $i_0$ such that
\[
h_i^{-1}h_j \in K, \quad \forall \, i,j \geqslant i_0,
\]
and hence
\[
h_i^{-1} h_j \in K \cap H = H_v, \quad \forall \, i,j \leqslant i_0.
\]
Consequently, in this case
\[
h_i \cdot v = (h_i h_j^{-1}) \cdot(h_j \cdot v) = h_j \cdot v,
\]
so $(h_i \cdot v)$ is eventually constant as claimed. Thus we can define
\[
g \cdot v = \lim_{i \in I} (h_i \cdot v).
\]

This definition is independent of the choice of nets. Indeed, if $(h_i)$ and $(h'_j)$ are two nets in $H$ both converging to $g$, again by the Cauchy property applied to the combined net, there exist indices $i_0, j_0$ such that for all $i \geqslant i_0$ and $j \geqslant j_0$ we have $h_i^{-1} h'_j \in K$. Thus $h_i^{-1} h'_j \in K \cap H = H_v$, so $h'_j \cdot v = h_i \cdot v$. Therefore, the eventual values of the two nets coincide.

It is easy to see that the above definition respects the group action axioms. We check that it is continuous. Since $H$ is dense in $G$ and $K$ is open in $G$, $K \cap H$ is dense in $K$. Thus for any $x \in K$, one can find a net $(x_i)_{i \in I} \subseteq K \cap H$ with $x_i \to x$. By our construction, $x \cdot v$ is the limit of the net $(x_i \cdot v)$. But $x_i \cdot v = v$ for all $i$, so $x \cdot v = v$ as well. Thus $x \in G_v$, and hence $K \leqslant G_v$. Consequently, $G_v$ is open in $G$, so the action is indeed continuous.

We check the above definition also respects the module structure over the skew group ring $RG$. Take $r \in R$, $g \in G$, $v \in V$, and choose a net $(h_i)$ in $H$ with $h_i \to g$. Because the action of $G$ on the $R$ is continuous, the net $(h_i \cdot r)$ is eventually constant and equals $g \cdot r$. Likewise $(h_i \cdot v)$ is eventually constant and equals $g \cdot v$. Thus for large enough $i$, one has
\[
h_i \cdot (r v) = (h_i \cdot r) \,(h_i \cdot v) = (g \cdot r) \, (g\cdot v),
\]
so by taking limit one gets $g \cdot (r v) = (g\cdot r)\,(g\cdot v)$ as desired.

Finally, we mention that the above extension of the $RH$-module structure on $V$ to the $RG$-module structure on $V$ is unique. Indeed, if $\phi: G \times V$ and $\psi: G \times V \to V$ are two extensions of the given $H$-action on $V$, they must be the same since they coincide in the dense subgroup $H$ of $G$.

\end{proof}

Now we are ready to prove the main result.

\begin{theorem} \label{res is an equivalence}
Suppose that both $G$ and $H$ act on $\Omega$. If they induce the same relational structure $\mathcal{M}$ on $\Omega$, then one has an equivalence
\[
RG \Mod^{\dis} \simeq RH \Mod^{\dis}.
\]
If furthermore $H$ is a subgroup of $G$, then this equivalence can be induced by the restriction functor.
\end{theorem}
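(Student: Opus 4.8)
The plan is to handle first the case where $H$ is a subgroup of $G$ — this is the substantive part, and it simultaneously yields the ``furthermore'' assertion — and then to deduce the general statement by passing through the automorphism group $\tilde{G} := \Aut(\mathcal{M})$. So suppose $H \leqslant G$ and that $G$ and $H$ induce the same relational structure $\mathcal{M}$. By Proposition \ref{topological characterization}, $H$ is then a dense subgroup of $G$. Lemma \ref{fully faithful} shows that $\Res \colon RG\Mod^{\dis} \to RH\Mod^{\dis}$ is fully faithful, and the proof of Proposition \ref{essential surjectivity} produces an explicit quasi-inverse, namely the functor $E$ sending a discrete $RH$-module to the unique discrete $RG$-module extending it along the dense inclusion. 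Hence $\Res$ is an equivalence, which is exactly the ``furthermore'' part of the theorem.

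For general $G$ and $H$ acting on $\Omega$ with the same canonical structure $\mathcal{M}$, first note that the pointwise stabilizer $G_{\Omega}$ (the kernel of $G \to \Sym(\Omega)$) is contained in every open subgroup $G_{\Gamma}$, hence acts trivially on $R$ and on every discrete $RG$-module; thus the $G$-action on $R$ factors through the image $\overline{G} \leqslant \Sym(\Omega)$, and $RG\Mod^{\dis}$ is canonically isomorphic to $R\overline{G}\Mod^{\dis}$, and similarly for $H$. We may therefore assume $G, H \leqslant \Sym(\Omega)$, and put $\tilde{G} := \Aut(\mathcal{M})$. By the standard fact recalled in the remark following Proposition \ref{topological characterization}, both $G$ and $H$ are dense subgroups of $\tilde{G}$. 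Next, the continuous $G$-action on $R$ extends uniquely to a continuous action of $\tilde{G}$ on $R$ by ring automorphisms: for $\tilde{g} \in \tilde{G}$ and $r \in R$ pick a net $g_i \to \tilde{g}$ in $G$; since $\Stab_G(r)$ is open the net $(g_i \cdot r)$ is eventually constant, and $\tilde{g} \cdot r := \lim_i (g_i \cdot r)$ is well defined, continuous, and multiplicative — this is the ring-level analogue of the module extension carried out in Proposition \ref{essential surjectivity}. In the situations of interest (for instance $R = k[\Phi]$ with the permutation action) this $\tilde{G}$-action restricts on $H$ to the given $H$-action, so $R$ becomes a discrete $\tilde{G}$-ring with $G$ and $H$ acting by restriction. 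Applying the subgroup case to the dense inclusions $G \hookrightarrow \tilde{G}$ and $H \hookrightarrow \tilde{G}$ gives equivalences $R\tilde{G}\Mod^{\dis} \simeq RG\Mod^{\dis}$ and $R\tilde{G}\Mod^{\dis} \simeq RH\Mod^{\dis}$ (each the restriction functor, with quasi-inverse the extension functor $E$); composing the first with a quasi-inverse of the second produces the required equivalence $RG\Mod^{\dis} \simeq RH\Mod^{\dis}$. Finally, when $H \leqslant G$ this composite carries a discrete $RG$-module $V$ to $V$ equipped with its $H$-action, i.e.\ it is naturally isomorphic to $\Res^{G}_{H}$, consistently with the first paragraph.

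The subgroup case is little more than an assembly of Proposition \ref{topological characterization}, Lemma \ref{fully faithful}, and Proposition \ref{essential surjectivity}, so I expect the only real work to be in the general case: identifying $\Aut(\mathcal{M})$ as a common dense overgroup and, chiefly, lifting the ring action from $G$ (and from $H$) to $\tilde{G}$, checking that it is a genuine continuous action by ring automorphisms and that the extensions arising from $G$ and from $H$ agree. No idea beyond the net/Cauchy argument of Proposition \ref{essential surjectivity} is needed, but the compatibility of the two extensions is the place where one must make precise exactly what the hypothesis ``$G$ and $H$ both act on $R$'' is meant to say.
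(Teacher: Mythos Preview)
Your proof is correct and matches the paper's approach: pass to the images $\overline{G}, \overline{H} \leqslant \Sym(\Omega)$, embed both densely in $\tilde{G} = \Aut(\mathcal{M})$, and invoke Lemma~\ref{fully faithful} and Proposition~\ref{essential surjectivity} for each dense inclusion to obtain the two restriction equivalences whose composite gives the result. You are in fact more explicit than the paper about extending the ring action on $R$ to $\tilde{G}$ and about the compatibility of the given $G$- and $H$-actions on $R$; the paper simply writes $R\tilde{G}\Mod^{\dis}$ without comment, so your caveat in the final paragraph is well placed rather than a defect.
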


\begin{proof}
Let $\rho: G \to \Sym(\Omega)$ be the group homomorphism determined by the action of $G$ on $\Omega$, and set $\overline{G} = G / \ker{\rho}$. Define $\overline{H}$ similarly. It is easy to check that $G$, $\overline{G}$, $H$ and $\overline{H}$ induce the same relational structure on $\Omega$, and
\[
RG \Mod^{\dis} \simeq R\overline{G} \Mod^{\dis}, \quad RH \Mod^{\dis} \simeq R\overline{H} \Mod^{\dis}.
\]
Furthermore, by \cite[Section 2.4]{Ca}, the closure of $\overline{G}$ and $\overline{H}$ in $\Sym(\Omega)$ is $\widetilde{G} = \Aut(\mathcal{M})$. By Lemma \ref{fully faithful} and Proposition \ref{essential surjectivity}, the restriction functors induce equivalences
\[
R\widetilde{G} \Mod^{\dis} \simeq R\overline{G} \Mod^{\dis}, \quad R\widetilde{G} \Mod^{\dis} \simeq R\overline{H} \Mod^{\dis},
\]
establishing the first statement. The second statement immediately follows from previous results.
\end{proof}

\begin{remark}
Strictly speaking, the restriction functor only gives one half of the equivalence, and one may wonder to complete the picture by finding the functor of the other direction. The proper candidate is the \textit{coinduction functor} which is the right adjoint of $\Res$; see \cite{Hr}.\footnote{In that paper, deviating from the convention, the author call the right adjoint \textit{induction functor} and the left adjoint \textit{coinduction functor}.}
\end{remark}

The only if direction of the second statement of this theorem also holds. In the rest of this section we give a clear statement and prove it.

\begin{lemma} \label{density}
A subgroup $H \leqslant G$ is dense in $G$ if and only if $HK = G$ for every open subgroup $K \leqslant G$.
\end{lemma}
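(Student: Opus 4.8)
The plan is to exploit the defining feature of the permutation topology on $G$: the stabilizer subgroups of finite subsets of $\Omega$ form a neighborhood basis of the identity, and each is an open subgroup. Consequently every open neighborhood of a point $g \in G$ contains a coset $gK$ for some open subgroup $K \leqslant G$, and conversely every such coset $gK$ is itself open. With this observation both implications reduce to one-line coset manipulations.

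For the ``only if'' direction I would assume $H$ is dense and fix an arbitrary open subgroup $K \leqslant G$. Given $g \in G$, the set $gK$ is an open neighborhood of $g$, so density of $H$ yields some $h \in H \cap gK$; writing $h = gk$ with $k \in K$ and using that $K$ is a subgroup gives $g = hk^{-1} \in HK$. Since $g$ was arbitrary, $HK = G$.

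For the ``if'' direction I would assume $HK = G$ for every open subgroup $K$, and take a nonempty open set $U \subseteq G$ together with a point $g \in U$. Applying the neighborhood basis to the open neighborhood $g^{-1}U$ of the identity, I get an open subgroup $K$ with $gK \subseteq U$. By hypothesis I may write $g = hk$ with $h \in H$ and $k \in K$; then $h = gk^{-1} \in gK \subseteq U$, so $H \cap U \neq \emptyset$. As $U$ was an arbitrary nonempty open set, $H$ is dense in $G$.

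I do not expect a genuine obstacle here; the only subtlety worth flagging is that the lemma's condition quantifies over open \emph{subgroups} rather than over all open sets, and this causes no loss because such subgroups form a neighborhood basis of the identity — which is precisely what licenses shrinking an arbitrary open neighborhood to a coset $gK$ in the ``if'' direction.
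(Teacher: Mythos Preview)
Your proof is correct and follows essentially the same approach as the paper's: both exploit that cosets $gK$ of open subgroups form a basic system of open sets, and both reduce the equivalence to the observation that $gK \cap H \neq \emptyset$ if and only if $g \in HK$. The paper compresses both directions into a single biconditional chain, whereas you treat the two implications separately and make the use of the neighborhood basis explicit, but the content is the same.
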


\begin{proof}
The subgroup $H$ is dense in $G$ if and only if for every $g \in G$ and every open subgroup $K \leqslant G$, the intersection $gK \cap H$ is not empty, and if and only if there exists a certain $x \in K$ such that $gx \in H$. But this is equivalent to $g \in Hx^{-1} \subseteq HK$.
\end{proof}

The above conclusion actually holds for any topological group with a neighborhood basis of the identity consisting of open subgroups.

\begin{proposition} \label{the other direction}
Let $H$ be a subgroup of $G$. If the restriction functor induces an equivalence
\[
\Res: RG \Mod^{\dis} \to RH \Mod^{\dis},
\]
then $H$ is dense in $G$.
\end{proposition}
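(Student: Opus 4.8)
\section*{Proof proposal}

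The plan is to prove the contrapositive: assuming $H$ is \emph{not} dense in $G$, I will exhibit an $RH$-module endomorphism of a discrete $RG$-module that does not lift to an $RG$-module endomorphism, so that $\Res$ fails to be full and hence cannot be part of an equivalence. I will tacitly assume $R \neq 0$, since otherwise every category of discrete modules is trivial and $\Res$ is an equivalence no matter what $H$ is.

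By Lemma \ref{density}, the non-density of $H$ produces an open subgroup $K \leqslant G$ with $HK \neq G$. I will work with the discrete $RG$-module $V = R(G/K)$, one of the standard generators. Viewed as an $RH$-set, $G/K$ breaks into $H$-orbits (the double cosets in $H \backslash G/K$), and the $H$-orbit of the trivial coset $eK$ is precisely $O := HK/K$, which by the choice of $K$ is a \emph{proper} subset of $G/K$. Writing $O^{c} = (G/K)\setminus O$, both $O$ and $O^{c}$ are $H$-invariant subsets, so $V$ decomposes as an $RH$-module as $R[O] \oplus R[O^{c}]$, the free $R$-modules spanned by the respective cosets; this is a genuine $RH$-module decomposition because $H$ both permutes each of $O$, $O^{c}$ within itself and acts continuously on the coefficient ring $R$. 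Let $f: \Res V \to \Res V$ be the projection onto the summand $R[O]$.

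The final step is to check that $f$ is $RH$-linear but not $RG$-linear. It is $R$-linear by construction and $H$-equivariant because $O$ is $H$-invariant, so $f \in \Hom_{RH}(\Res V, \Res V)$. On the other hand, pick $g_{0} \in G$ with $g_{0} \notin HK$, so that $g_{0}K \notin O$. Then $f(g_{0} \cdot eK) = f(g_{0}K) = 0$, while $g_{0} \cdot f(eK) = g_{0} \cdot eK = g_{0}K \neq 0$ in $V$ (the cosets form an $R$-basis of $V$ and $R \neq 0$). Hence $f$ is not $G$-equivariant, so it does not lie in the image of the restriction map $\Hom_{RG}(V, V) \to \Hom_{RH}(\Res V, \Res V)$. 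Therefore $\Res$ is not full, contradicting the hypothesis that it induces an equivalence; this proves $H$ must be dense in $G$.

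I do not anticipate a real obstacle: everything reduces to Lemma \ref{density} plus an explicit projection. The only care needed is the bookkeeping for the $RH$-module structure --- one must simultaneously track the permutation action of $H$ on the index set $G/K$ and its continuous action on the coefficient ring $R$ --- together with the harmless but logically necessary observation that $R$ is nonzero.
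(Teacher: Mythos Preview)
Your argument is correct and follows essentially the same route as the paper: both prove the contrapositive, invoke Lemma~\ref{density} to obtain an open $K \leqslant G$ with $HK \neq G$, and use the $H$-orbit $HK/K \subsetneq G/K$ to build an $H$-equivariant map on $V = R(G/K)$ that fails to be $G$-equivariant, thereby showing $\Res$ is not full. The only cosmetic difference is the target: the paper maps $V \to R$ via the indicator of $HK/K$, whereas you use the projection $V \to V$ onto the summand $R[HK/K]$; your explicit remark that $R \neq 0$ is a nice touch the paper leaves implicit.
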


\begin{proof}
Suppose that $H$ is not dense in $G$. By Lemma \ref{density}, there exists an open subgroup $K \leqslant G$ with $HK \neq G$. Let $V = R(G/K)$, the free $R$-module with basis $G/K$, endowed with the $G$-action
\[
g \cdot (axK \big) = (g \cdot a)(gxK), \quad x,g \in G, \, a \in R.
\]
It is easy to check that $V$ is a discrete $RG$-module.

Define $f: V \to R$ by
\[
f(agK)=
\begin{cases}
a & \text{if } gK \in HK,\\
0 & \text{if }gK \notin HK,
\end{cases}
\]
for $a \in R$ and $g \in G$ and extend $R$-linearly, where $HK$ means the $H$-orbit in $G/K$ generated by $K \in G/K$. It is easy to check that $f$ is $H$-equivariant.

On the other hand, because $HK\neq G$, we can find $g_0K \in HK$ and $g_1K \notin HK$. Let $g = g_1g_0^{-1}$. Then
\[
f\big(g\cdot(g_0K)\big)=f(g_1K)=0 \neq 1 = g \cdot 1 = g\cdot f(g_0K)
\]
so $f$ is not $G$-equivariant. Consequently, the restriction map
\[
\Hom_{RG} (V, R)\longrightarrow \Hom_{RH} (V, R)
\]
is not surjective, so the restriction functor is not full, and hence cannot be an equivalence.
\end{proof}

\begin{example}
Let $\tilde{\Omega}$ be the Cartesian product of several copies of $\Omega$, and let $G = \Sym(\Omega)$ acting on $\tilde{\Omega}$ diagonally. This action is not highly homogeneous, but oligomorphic. Let $k$ a commutative ring, and $R = k[\tilde{\Omega}]$. In \cite[Question 1.7]{LPY}, the authors ask for a classification of subgroups $H \leqslant G$ such that the restriction functor gives an equivalence $RG \Mod^{\dis} \simeq RH \Mod^{\dis}$. Now the answer is clear: this happens if and only if the action of $H$ on $\Omega$ is highly transitive. Indeed, this equivalence holds if and only if $H$ is a dense subgroup of $G$, viewed as a subgroup of the topological group $\Sym(\tilde{\Omega})$. But according to Proposition \ref{topological characterization} and Corollary \ref{orbits on cartesian products}, $H$ is a dense subgroup of $G$ if and only if they share the same orbits on $\tilde{\Omega}^n$ for infinitely many $n$, and if and only if $G$ and $H$ share the same orbits for each $\Omega^n$; that is, the action of $H$ on $\Omega$ is highly transitive.
\end{example}

\section{Orbit categories}

In this section we consider two categories associated to the action of $G$ on $\Omega$: the orbit category and the category of finite substructures and embeddings. We show that under a mild condition they are actually isomorphic.

The orbit category $\mathcal{G}$ of $G$ with respect to the neighborhood basis of the identity
\[
\mathcal{B} = \{ \Stab_G(\Gamma) \mid G \subseteq \Omega, \, |\Gamma| < \infty \}
\]
is defined as follows: its objects are left cosets $G/H$ with $H \in \mathcal{B}$, and morphisms from $G/H$ to $G/K$ are $G$-equivariant maps. It is well known that these maps are induced by elements $g \in G$. Explicitly, every $G$-equivariant map $G/H \to G/K$ is of the form $\sigma_g: xH \mapsto xg^{-1}K$ where $g \in G$ satisfies $gHg^{-1} \leqslant K$. The map $g \mapsto \sigma_g$ defines a surjective map
\[
N_G(H, K) = \{g \in G \mid gHg^{-1} \subseteq K \} \longrightarrow \mathcal{G}(G/H, G/K),
\]
and $\sigma_g = \sigma_h$ if and only if $Kg = Kh$.

Orbit categories play a fundamental role in the study of discrete representations of $G$. Explicitly, we can impose the atomic Grothendieck topology $J_{at}$ on $\mathcal{G}$. By \cite[Proposition 2.1 and Theorem 2.2]{LPY}, there is a unique structure sheaf $\mathcal{R}$ on the atomic site $(\mathcal{G}, J_{at})$ such that
\[
RG \Mod^{\dis} \simeq \Sh(\mathcal{A}^{\op}, J_{at}, \mathcal{R}),
\]
the category of sheaves of modules over the ringed sites; see \cite{Jo, LPY, MM}. In particular, this equivalence tells us that the following result is essentially equivalent to the second statement of Theorem \ref{res is an equivalence}.

\begin{proposition}
Suppose that $H$ is a dense subgroup of $G$. Then their orbit categories $\mathcal{G}$ and $\mathcal{H}$ are isomorphic, and the structure sheaves $\mathcal{R}_G$ over $(\mathcal{G}, J_{at})$ and $\mathcal{R}_H$ over $(\mathcal{H}, J_{at})$ are also isomorphic when we identify these two sites.
\end{proposition}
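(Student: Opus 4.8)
The plan is to write down an explicit isomorphism of categories $\Theta\colon \mathcal{H}\to\mathcal{G}$, given on objects by $H/(H\cap G_\Gamma)\mapsto G/G_\Gamma$, and then to check that it transports the atomic topology and the structure sheaf. The whole argument rests on one observation: since $H$ is dense in $G$ and each $G_\Gamma$ is open, hence closed, the subgroup $H\cap G_\Gamma=\Stab_H(\Gamma)$ is dense in $G_\Gamma$, so $G_\Gamma=\overline{H\cap G_\Gamma}$ (closure in $G$), and more generally $\overline{H\cap K}=K$ for every open $K\leqslant G$. This immediately makes the object assignment well defined (if $H\cap G_\Gamma=H\cap G_{\Gamma'}$ then $G_\Gamma=G_{\Gamma'}$ by taking closures), injective ($G_\Gamma=G_{\Gamma'}$ trivially forces $H\cap G_\Gamma=H\cap G_{\Gamma'}$) and surjective, so $\Theta$ is a bijection on objects.

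For morphisms I would use the description recalled before the statement: a morphism $H/L\to H/L'$ is $\tau_h\colon xL\mapsto xh^{-1}L'$ with $h\in H$ and $hLh^{-1}\subseteq L'$, where $\tau_h=\tau_{h'}$ iff $L'h=L'h'$; likewise in $\mathcal{G}$ with representatives from $G$. Set $\Theta(\tau_h)=\sigma_h$. That this lands in $\mathcal{G}(G/G_\Gamma,G/G_\Sigma)$ follows because conjugation by $h$ is a homeomorphism of $G$, so $hLh^{-1}\subseteq G_\Sigma$ gives $hG_\Gamma h^{-1}=\overline{hLh^{-1}}\subseteq G_\Sigma$. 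Well-definedness and injectivity come together from the chain $\sigma_h=\sigma_{h'}\iff h'h^{-1}\in G_\Sigma\iff h'h^{-1}\in H\cap G_\Sigma=L'\iff\tau_h=\tau_{h'}$ (for $h,h'\in H$). Surjectivity on hom-sets is the one step with content: given $\sigma_g$ with $gG_\Gamma g^{-1}\subseteq G_\Sigma$, density yields $G_\Sigma H=(HG_\Sigma)^{-1}=G$ by Lemma \ref{density}, so $g=sh$ with $s\in G_\Sigma$ and $h\in H$; then $G_\Sigma h=G_\Sigma g$ so $\sigma_h=\sigma_g$, while $hG_\Gamma h^{-1}=s^{-1}(gG_\Gamma g^{-1})s\subseteq G_\Sigma$ forces $hLh^{-1}\subseteq H\cap G_\Sigma=L'$, so $\tau_h$ is a morphism with $\Theta(\tau_h)=\sigma_g$. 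Since $\tau_h\circ\tau_{h'}=\tau_{hh'}$, $\sigma_h\circ\sigma_{h'}=\sigma_{hh'}$, and identities go to identities, $\Theta$ is a functor; being bijective on objects and on every hom-set, it is an isomorphism of categories $\mathcal{H}\cong\mathcal{G}$.

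To finish I would argue that $\Theta$ matches the sites and the structure sheaves. The atomic topology on an orbit category is intrinsic — a sieve covers precisely when it is nonempty — so any isomorphism of categories automatically preserves it; thus $\Theta\colon(\mathcal{H},J_{at})\xrightarrow{\ \sim\ }(\mathcal{G},J_{at})$ is an isomorphism of sites. For the structure sheaves, recall from \cite[Proposition 2.1, Theorem 2.2]{LPY} that $\mathcal{R}_G(G/G_\Gamma)=R^{G_\Gamma}$ with the map induced by $\sigma_g$ being $r\mapsto g^{-1}\cdot r$, and similarly for $\mathcal{R}_H$. Because $R$ is discrete and $G$ acts continuously, $\Stab_G(r)$ is open and closed for each $r\in R$, so $H\cap G_\Gamma\subseteq\Stab_G(r)$ forces $G_\Gamma=\overline{H\cap G_\Gamma}\subseteq\Stab_G(r)$; hence $R^{H\cap G_\Gamma}=R^{G_\Gamma}$. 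Under these identifications, and using that every morphism of $\mathcal{G}$ can be represented by an element of $H$ acting on $R$ exactly as in $\mathcal{H}$, the transition maps of $\mathcal{R}_G$ and $\mathcal{R}_H$ coincide, so $\Theta^{\ast}\mathcal{R}_G=\mathcal{R}_H$; alternatively one can appeal to the uniqueness clause of \cite[Proposition 2.1]{LPY}, since $\Theta^{\ast}\mathcal{R}_G$ is then a structure sheaf on $(\mathcal{H},J_{at})$.

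I expect the main obstacle to be precisely the surjectivity of $\Theta$ on hom-sets: one must upgrade an arbitrary $G$-equivariant map to one represented inside $H$ without destroying the conjugation condition, and this is exactly where density (in the form $G=G_\Sigma H$) and the closure identity $G_\Gamma=\overline{H\cap G_\Gamma}$ are indispensable. A secondary, more bookkeeping obstacle is matching the structure sheaves on the nose rather than merely up to a coherent isomorphism, which requires quoting the exact form of $\mathcal{R}_G$ from \cite{LPY}; if that description turns out awkward to use directly, the uniqueness statement there provides a clean fallback.
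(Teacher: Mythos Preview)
Your proposal is correct and follows essentially the same route as the paper: both build the isomorphism via the object map $H/(H\cap G_\Gamma)\leftrightarrow G/G_\Gamma$, use the closure identity $\overline{H\cap G_\Gamma}=G_\Gamma$ for well-definedness and injectivity, invoke Lemma~\ref{density} (in the form $G=G_\Sigma H$) to show every $\sigma_g$ is represented by some $h\in H$, and conclude $R^{H\cap G_\Gamma}=R^{G_\Gamma}$ by continuity of the $G$-action on the discrete ring $R$. The only cosmetic difference is the direction of the functor you write down ($\mathcal{H}\to\mathcal{G}$ versus the paper's $\mathcal{G}\to\mathcal{H}$), which is immaterial.
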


\begin{proof}
Construct a functor $\mu: \mathcal{G} \to \mathcal{H}$ as follows:
\begin{itemize}
\item \textbf{On objects.} For a finite $\Gamma \subseteq \Omega$, we set $\mu(G/G_{\Gamma}) = H/H_{\Gamma}$, where $H_{\Gamma} = H \cap G_{\Gamma}$. It is easy to check this map is well-defined and surjective. It is also injective. Indeed, since $H_{\Gamma}$ is dense in $G_{\Gamma}$ and $G_{\Gamma}$ is closed in $G$, the closure of $H_{\Gamma}$ in $G$ is precisely $G_{\Gamma}$. Thus if $G_{\Gamma} \neq G_{\Sigma}$, then $H_{\Gamma} \neq H_{\Sigma}$.

\item \textbf{On morphisms.} Given $f \in \Hom_G(G/G_{\Gamma}, G/G_{\Sigma})$, we can find an element $g\in G$ with $g^{-1}G_{\Gamma} g \leqslant G_{\Sigma}$. Choose $h \in H$ with $hG_{\Sigma} = gG_{\Sigma}$, which is possible by Lemma \ref{density}, and define $\mu(f)$ to be the $H$-equivariant map sending $H_{\Gamma}$ to $hH_{\Sigma}$.
\end{itemize}

It is tedious but routine to check that $\mu$ is indeed a functor. Since it is a bijection on object sets, we only need to show that it is fully faithful. This follows from the identifications
\begin{align*}
\Hom_H (H/H_{\Gamma}, H/H_{\Sigma}) & = \{ h \in H \mid h H_{\Gamma} h^{-1} \leqslant H_{\Sigma} \} / H_{\Sigma}\\
& = \{ h \in H \mid hG_{\Gamma} h^{-1} \leqslant G_{\Sigma} \} / H_{\Sigma}\\
& = \{ g \in G \mid gG_{\Gamma} g^{-1} \leqslant G_{\Sigma} \} / G_{\Sigma}\\
& = \Hom_G (G/G_{\Gamma}, G/G_{\Sigma}).
\end{align*}
To prove the second identification, we just observe that the closure of $H_{\Gamma}$ in $G$ is precisely $G_{\Gamma}$, so if the conjugate map by $h$ sends $H_{\Gamma}$ into $H_{\Sigma}$, then it must send the closure $G_{\Gamma}$ into $G_{\Sigma}$. To prove the third identifications, we use Lemma \ref{density} to write every $g \in G$ as $g = h g'$ with $g' \in G_{\Gamma}$ and $h \in H$.

We have constructed an explicit isomorphism $\mu: \mathcal{G} \to \mathcal{H}$ to identify $(\mathcal{G}, J_{at})$ and $(\mathcal{H}, J_{at})$. By \cite[Section 2]{LPY}, for an object $G/G_{\Gamma}$ in $\mathcal{G}$, one has
\[
\mathcal{R}_G (G/G_{\Gamma}) = R^{G_{\Gamma}} = \{ r \in R \mid g \cdot r = r, \, \forall g \in G_{\Gamma} \},
\]
which is the same as
\[
\mathcal{R}_H (H/H_{\Gamma}) = R^{H_{\Gamma}} = \{ r \in R \mid h \cdot r = r, \, \forall h \in H_{\Gamma} \}
\]
since if $r \in R$ is fixed by the dense subgroup $H_{\Gamma}$, then it must be fixed by its closure $G_{\Gamma}$.
\end{proof}

The orbit category $\mathcal{G}$ is intrinsic as it is completely determined by the structure of $G$. We can also use the action of $G$ on $\Omega$ to construct an extrinsic category. Let $\mathcal{M} = (\Omega, (R_i)_{i \in I})$ be the canonical relational structure on $\Omega$ induced by the action of $G$. Let $\mathcal{A}$ be the category of finite substructures of $\mathcal{M}$ and embeddings, and $\Age(\mathcal{M})$ the object set of this category. By abuse of notation, we often denote a finite substructure by its underlying set.

\begin{definition}
We say that $\Age(\mathcal{M})$ has the amalgamation property if given finite substructures $\Sigma, \Gamma_1, \Gamma_2$ of $\mathcal{M}$ as well as embeddings $f_i: \Sigma \to \Gamma_i$, there exist a finite substructure $\Delta$ and embeddings $g_i: \Gamma_i \to \Delta$ such that the following diagram commutes:
\[
\xymatrix{
\Sigma \ar[r]^-{f_1} \ar[d]_-{f_2} & \Gamma_1 \ar[d]^-{g_1}\\
\Gamma_2 \ar[r]_-{g_2} & \Delta.
}
\]
It satisfies the strong amalgamation property if in the above diagram, one can choose $g_1$ and $g_2$ such that for $b_1 \in \Gamma_1$ and $b_2 \in \Gamma_2$, whenever $g_1(b_1) = g_2(b_2)$, there exists some $a \in \Sigma$ such that $b_1 = f_1(a)$ and $b_2 = f_2(a)$.
\end{definition}

It is easy to see that the amalgamation property is precisely the right Ore condition on $\mathcal{A}$, and the strong amalgamation property is equivalent to saying that $\Delta$ in the above diagram is a pushout.

\begin{lemma} \label{strong amalgamation}
The following are equivalent:
\begin{enumerate}
\item strong amalgamation property holds for $\Age(\mathcal{M})$;

\item for every finite set $\Gamma \subseteq \Omega$, the stabilizer subgroup $G_{\Gamma}$ has no finite orbits outside $\Gamma$;

\item for every finite set $\Gamma \subseteq \Omega$, the stabilizer subgroup $G_{\Gamma}$ has no fixed point outside $\Gamma$.
\end{enumerate}
\end{lemma}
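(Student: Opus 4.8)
The plan is to prove the cycle of implications $(1)\Rightarrow(3)\Rightarrow(2)\Rightarrow(1)$. Throughout I will use the homogeneity of $\mathcal{M}$ recorded above, in the strong form that every embedding between finite substructures of $\mathcal{M}$ extends to an element of $G$.

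\emph{$(3)\Rightarrow(2)$.} This is elementary. Suppose some $G_\Gamma$ has a finite orbit $O$ not contained in $\Gamma$. Since each point of $\Gamma$ is fixed by $G_\Gamma$, such an orbit is disjoint from $\Gamma$; write $O=\{x_1,\dots,x_m\}$ with $m\geqslant 1$. If $m=1$, then $x_1\in\Omega\setminus\Gamma$ is a fixed point of $G_\Gamma$. If $m\geqslant 2$, put $\Gamma^{\ast}=\Gamma\cup\{x_1,\dots,x_{m-1}\}$; then $G_{\Gamma^{\ast}}\leqslant G_\Gamma$ fixes $x_1,\dots,x_{m-1}$ and cannot move $x_m$ out of $O$, so its orbit on $x_m$ is $\{x_m\}$, making $x_m\in\Omega\setminus\Gamma^{\ast}$ a fixed point of $G_{\Gamma^{\ast}}$. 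Either way $(3)$ fails.

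\emph{$(1)\Rightarrow(3)$.} I argue by contraposition: assume $G_\Gamma$ fixes some $x\in\Omega\setminus\Gamma$. Apply the strong amalgamation property to $\Sigma=\Gamma$, $\Gamma_1=\Gamma_2=\Gamma\cup\{x\}$, and $f_1=f_2$ the inclusion, obtaining $\Delta$ and embeddings $g_1,g_2\colon\Gamma\cup\{x\}\to\Delta$ with $g_1|_{\Gamma}=g_2|_{\Gamma}$; the strong condition forces $g_1(x)\neq g_2(x)$ because $x\notin\Gamma$. Extending $g_1,g_2$ to $\widehat g_1,\widehat g_2\in G$, the element $\widehat g_1^{-1}\widehat g_2$ fixes $\Gamma$ pointwise, hence lies in $G_\Gamma$, hence fixes $x$; therefore $g_1(x)=\widehat g_1(x)=\widehat g_2(x)=g_2(x)$, a contradiction.

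\emph{$(2)\Rightarrow(1)$.} The key ingredient is a point-pushing lemma: for any finite $\Sigma\subseteq\Omega$, any finite $Y\subseteq\Omega\setminus\Sigma$, and any finite $F\subseteq\Omega$, there is $g\in G_\Sigma$ with $g(Y)\cap F=\emptyset$. I would prove it by induction on $|Y|$. Choose $y\in Y$ and, by induction, $g'\in G_\Sigma$ with $g'(Y\setminus\{y\})\cap F=\emptyset$. If $g'(y)\notin F$ we are done; otherwise $g'(y)$ lies outside the finite set $\Sigma''=\Sigma\cup g'(Y\setminus\{y\})$, so by hypothesis $(2)$ the orbit of $g'(y)$ under $G_{\Sigma''}$ is infinite and some $h\in G_{\Sigma''}$ satisfies $h(g'(y))\notin F$; then $hg'\in G_\Sigma$ fixes $g'(Y\setminus\{y\})\subseteq\Sigma''$ pointwise, so $hg'(Y)\cap F=\emptyset$. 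Granting the lemma, take data $\Sigma,\Gamma_1,\Gamma_2,f_1,f_2$ for strong amalgamation, extend each $f_i$ to $\widehat f_i\in G$, and set $\Gamma_i'=\widehat f_i^{-1}(\Gamma_i)$, so $\Sigma\subseteq\Gamma_i'$. Apply the lemma with $Y=\Gamma_2'\setminus\Sigma$ and $F=\Gamma_1'$ to obtain $g\in G_\Sigma$ with $g(\Gamma_2'\setminus\Sigma)\cap\Gamma_1'=\emptyset$, and put $\Delta=\Gamma_1'\cup g(\Gamma_2')$, $g_1=\widehat f_1^{-1}\colon\Gamma_1\to\Delta$, $g_2=g\circ\widehat f_2^{-1}\colon\Gamma_2\to\Delta$. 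Then $g_1f_1$ and $g_2f_2$ both equal the inclusion $\Sigma\hookrightarrow\Delta$ (using $g\in G_\Sigma$), and if $g_1(b_1)=g_2(b_2)$ then the common value is $\widehat f_1^{-1}(b_1)\in\Gamma_1'\cap g(\Gamma_2')=\Sigma$, say equal to $a\in\Sigma$, which forces $b_1=\widehat f_1(a)=f_1(a)$ and $b_2=\widehat f_2(a)=f_2(a)$; this is exactly the strong amalgamation condition.

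The main obstacle is the point-pushing lemma together with keeping the bookkeeping of the reduction to inclusions and of the strong condition straight; in the lemma the essential input is the infiniteness of the relevant stabilizer orbits, i.e.\ condition $(2)$, equivalently $(3)$. Everything else is formal manipulation with homogeneity and cosets.
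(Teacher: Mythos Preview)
Your proof is correct, and it takes a more self-contained route than the paper. The paper's proof of this lemma cites Cameron's book \cite[Section~2.7, (2.15)]{Ca} for the equivalence of (1) and (2), and only supplies the easy equivalence $(2)\Leftrightarrow(3)$ directly (with an argument essentially identical to your $(3)\Rightarrow(2)$). By contrast, you close the full cycle $(1)\Rightarrow(3)\Rightarrow(2)\Rightarrow(1)$ without external references: your $(1)\Rightarrow(3)$ is a short contraposition exploiting the definition of strong amalgamation together with homogeneity, and your $(2)\Rightarrow(1)$ is an explicit construction via the ``point-pushing'' lemma, which is really where the content lies and which uses (2) in exactly the right place. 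The trade-off is clear: the paper's version is shorter because it outsources the substantive direction, while yours is longer but makes the lemma stand on its own and exposes precisely how the infiniteness of stabilizer orbits produces the disjointness needed for a strong amalgam.
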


\begin{proof}
For the equivalence between (1) and (2), see \cite[Section 2.7. (2.15)]{Ca}.

$(2) \Rightarrow (3)$: This is trivial.

$(3) \Rightarrow (2)$: Suppose that there is a finite $\Gamma \subseteq \Omega$ such that $G_{\Gamma}$ has a finite orbit $\Sigma \subseteq \Omega \setminus \Gamma$. Pick $s \in \Sigma$, and let $\Sigma' = \Sigma \setminus \{s\}$ and $H = G_{\Gamma \cup \Sigma'}$. Clearly, $H$ is a subgroup of $G_{\Gamma}$, so $\Sigma$ is an $H$-orbit. Since $H$ fixes every element in $\Sigma'$, it also fixes $s$. On the other hand, $\Gamma \cup \Sigma'$ is finite, so by (3), $H$ has no fixed point outside $\Gamma \cup \Sigma'$. This contradiction tells us that $G_{\Gamma}$ cannot have a finite orbit in $\Omega \setminus \Gamma$, and (2) holds.
\end{proof}

Now we give a categorical characterization of the strong amalgamation property.

\begin{theorem} \label{orbit categories}
The strong amalgamation property holds for $\Age(\mathcal{M})$ if and only if the map
\[
\phi: \Ob(\mathcal{A}) \longrightarrow \Ob(\mathcal{G}), \quad \Gamma \mapsto G/G_{\Gamma}
\]
induces an isomorphism from $\mathcal{A}^{\op}$ to $\mathcal{G}$.
\end{theorem}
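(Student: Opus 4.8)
The plan is to first make the functor $\phi$ explicit, then reduce the theorem to a single condition on the fixed points of the pointwise stabilizers $G_\Gamma$, and finally invoke Lemma~\ref{strong amalgamation}. I would begin by defining $\phi$ on morphisms. Since $\mathcal{M}$ is homogeneous (every embedding of finite substructures extends to an element of $G$), an embedding $f\colon \Sigma \to \Gamma$ — that is, a morphism $\Gamma \to \Sigma$ in $\mathcal{A}^{\op}$ — extends to some $g \in G$ with $g\cdot s = f(s)$ for all $s \in \Sigma$. A direct computation then gives $g^{-1}G_\Gamma g \leqslant G_\Sigma$: for $k \in G_\Gamma$ and $s \in \Sigma$ one has $(g^{-1}kg)\cdot s = g^{-1}\cdot\big(k\cdot(g\cdot s)\big) = g^{-1}\cdot(g\cdot s) = s$, since $g\cdot s = f(s) \in \Gamma$ is fixed by $k$. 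Hence $xG_\Gamma \mapsto xgG_\Sigma$ is a well-defined $G$-equivariant map $G/G_\Gamma \to G/G_\Sigma$, which I take to be $\phi(f)$. Routine checks show that $\phi(f)$ is independent of the chosen extension (two extensions of $f$ differ by an element of $G_\Sigma$), that $\phi$ preserves identities and composition (if $g$ extends $f\colon \Sigma \to \Gamma$ and $g'$ extends $f'\colon \Delta \to \Sigma$, then $gg'$ extends $f\circ f'$), and that $\phi$ is faithful (if extensions $g_1, g_2$ of $f_1, f_2\colon \Sigma \to \Gamma$ satisfy $g_1G_\Sigma = g_2G_\Sigma$, then $g_2^{-1}g_1$ fixes $\Sigma$ pointwise, so $f_1 = f_2$). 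Since every object of $\mathcal{G}$ is of the form $G/G_\Gamma$ with $\Gamma \subseteq \Omega$ finite, $\phi$ is also surjective on objects. Consequently, $\phi$ is an isomorphism of categories if and only if it is both full and injective on objects, and the task reduces to showing that this holds precisely when $\Age(\mathcal{M})$ has the strong amalgamation property.

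The computational heart is the identity
\[
g^{-1}G_\Gamma g \leqslant G_\Sigma \quad\Longleftrightarrow\quad g\cdot\Sigma \subseteq \mathrm{Fix}(G_\Gamma)
\]
(where $\mathrm{Fix}(G_\Gamma) = \{x \in \Omega \mid k\cdot x = x \text{ for all } k \in G_\Gamma\}$), obtained by reading the chain of equalities above in both directions. Using the description recalled at the start of this section, namely that $\mathcal{G}(G/G_\Gamma, G/G_\Sigma)$ is the set of maps $xG_\Gamma \mapsto xgG_\Sigma$ with $g^{-1}G_\Gamma g \leqslant G_\Sigma$ (modulo right $G_\Sigma$-cosets), the identity identifies the full hom-set with those $g$ satisfying $g\cdot\Sigma \subseteq \mathrm{Fix}(G_\Gamma)$, whereas the image of $\phi$ on $\mathcal{A}(\Sigma,\Gamma)$ is identified with those $g$ satisfying $g\cdot\Sigma \subseteq \Gamma$ — indeed, every such $g$ restricts to an embedding $g|_\Sigma \colon \Sigma \to \Gamma$ mapping under $\phi$ to the corresponding morphism, and conversely. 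Since $\Gamma \subseteq \mathrm{Fix}(G_\Gamma)$ always, this shows that $\phi$ is full if and only if $\mathrm{Fix}(G_\Gamma) = \Gamma$ for every finite $\Gamma \subseteq \Omega$; the only non-formal point is that a point $p \in \mathrm{Fix}(G_\Gamma)\setminus\Gamma$ yields, taking $\Sigma = \{p\}$ and $g = 1$, a morphism $G/G_\Gamma \to G/G_{\{p\}}$ outside the image of $\phi$. The same condition also governs injectivity on objects: if $p \in \mathrm{Fix}(G_\Gamma)\setminus\Gamma$, then $G_{\Gamma\cup\{p\}} = G_\Gamma \cap G_p = G_\Gamma$, so $\phi(\Gamma) = \phi(\Gamma\cup\{p\})$ while $\Gamma \neq \Gamma\cup\{p\}$; conversely, if $\mathrm{Fix}(G_\Gamma) = \Gamma$ for every finite $\Gamma$, then $G_\Gamma = G_\Sigma$ forces $\Gamma = \mathrm{Fix}(G_\Gamma) = \mathrm{Fix}(G_\Sigma) = \Sigma$.

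It then remains only to match ``$\mathrm{Fix}(G_\Gamma) = \Gamma$ for every finite $\Gamma \subseteq \Omega$'' with strong amalgamation, which is exactly the equivalence of (1) and (3) in Lemma~\ref{strong amalgamation}, since ``$G_\Gamma$ has no fixed point outside $\Gamma$'' says precisely $\mathrm{Fix}(G_\Gamma) = \Gamma$. Assembling: if $\Age(\mathcal{M})$ has strong amalgamation, then $\phi$ is full and injective on objects, hence — being always faithful and surjective on objects — an isomorphism $\mathcal{A}^{\op} \xrightarrow{\ \sim\ } \mathcal{G}$; conversely, an isomorphism $\phi$ is in particular injective on objects (equivalently, full), which forces $\mathrm{Fix}(G_\Gamma) = \Gamma$ for all finite $\Gamma$ and hence strong amalgamation. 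I expect the main difficulty to lie not in any one step but in assembling $\phi$ with the correct variance and coset conventions, and in noticing that the two properties needed for $\phi$ to be an isomorphism — fullness and injectivity on objects — are both controlled by the same quantity, the gap between $\Gamma$ and $\mathrm{Fix}(G_\Gamma)$; once the identity $g^{-1}G_\Gamma g \leqslant G_\Sigma \Leftrightarrow g\cdot\Sigma \subseteq \mathrm{Fix}(G_\Gamma)$ is established, the rest is bookkeeping together with the appeal to Lemma~\ref{strong amalgamation}.
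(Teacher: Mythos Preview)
Your proof is correct and follows the same route as the paper: construct $\phi$ on morphisms by extending embeddings to elements of $G$, verify well-definedness and faithfulness, and reduce the isomorphism question to the fixed-point characterization of strong amalgamation in Lemma~\ref{strong amalgamation}. Your treatment is in one respect more explicit than the paper's: you show that \emph{fullness} of $\phi$ is itself equivalent to $\mathrm{Fix}(G_\Gamma)=\Gamma$ for every finite $\Gamma$, via the identity $g^{-1}G_\Gamma g \leqslant G_\Sigma \Longleftrightarrow g\cdot\Sigma \subseteq \mathrm{Fix}(G_\Gamma)$, whereas the paper asserts fullness in a single line and only under the standing strong-amalgamation hypothesis---your example with $\Sigma=\{p\}$ and $g=1$ makes clear that fullness genuinely fails otherwise.
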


\begin{proof}
Firstly, we show that $\phi$ is a bijection on object sets if and only if the strong amalgamation property holds for $\Age(\mathcal{M})$. Since $\phi$ is clearly surjective, it suffices to consider its injectivity. But $\phi$ is not injective if and only if there are two distinct finite substructures $\Gamma$ and $\Sigma$ such that $G_{\Gamma} = G_{\Sigma}$. Without loss of generality suppose that there is a certain $x \in \Gamma \setminus \Sigma$. Then $x \notin \Sigma$ and is fixed by $G_{\Sigma}$. But by Lemma \ref{strong amalgamation}, this happens if and only if $\Age(\mathcal{M})$ does not have the strong amalgamation property.

It remains to construct an explicit isomorphism from $\mathcal{A}^{\op}$ to $\mathcal{G}$, which is still denoted by $\phi$. We only need to describe its definition on morphism sets, and check that it is fully faithful.

A morphism $\varsigma: \Sigma \to \Gamma$ in $\mathcal{A}^{\op}$ corresponds to a unique embedding $\sigma: \Gamma \to \Sigma$. By the homogeneity of $\Age(\mathcal{M})$, $\sigma$ extends to an element $\tilde{\sigma} \in G$. Since $\tilde{\sigma}^{-1}G_{\Sigma} \tilde{\sigma}$ is contained in $G_{\Gamma}$, by the definition of the orbit category, $\tilde{\sigma}^{-1}$ represents a morphism
\[
G/G_{\Sigma} \longrightarrow G/G_{\Gamma}, \quad gG_{\Sigma} \mapsto g \tilde{\sigma} G_{\Gamma}
\]
in $\mathcal{G}$. Set $\phi(\varsigma)$ to be this morphism.

This construction is well defined. Indeed, if $\tilde{\sigma}$ and $\tilde{\tau}$ are extensions of $\sigma: \Gamma \to \Sigma$ and $\tau: \Gamma \to \Sigma$ in $\mathcal{A}$ respectively, then for any $g \in G$, one has
\begin{align*}
& g \tilde{\sigma} G_{\Gamma} = g \tilde{\tau} G_{\Gamma} \quad \Longleftrightarrow \quad \tilde{\sigma} G_{\Gamma} = \tilde{\tau} G_{\Gamma} \quad \Longleftrightarrow \quad \tilde{\tau}^{-1} \tilde{\sigma} G_{\Gamma} = G_{\Gamma}\\
& \Longleftrightarrow \quad \tilde{\tau}^{-1} \tilde{\sigma} \in G_{\Gamma} \quad \Longleftrightarrow \quad \tilde{\tau}^{-1} \tilde{\sigma}(x) = x, \, \forall x \in \Gamma\\
& \Longleftrightarrow \tilde{\tau}^{-1} \sigma (x) = x, \, \forall x \in \Gamma \quad \Longleftrightarrow \quad \sigma(x) = \tilde{\tau} (x) = \tau(x), \, \forall x \in \Gamma.
\end{align*}
Consequently, if $\tilde{\sigma}$ and $\tilde{\tau}$ are different extensions of $\sigma$, then they represent the same morphism in $\mathcal{G}$, so this construction is independent of the choice of extensions, and hence well defined. The above argument also shows that $\phi$ is faithful. The reader can easily check the functoriality of $\phi$, and it is full since each morphism in $\mathcal{A}$ extends to an element in $G$ and morphisms in $\mathcal{G}$ are represented by elements in $G$.
\end{proof}

\begin{example}
Let $G = \Sym(\Omega)$ and $\tilde{\Omega}$ the Cartesian product of $n$ copies of $\Omega$, and let $\mathcal{M}$ be the relational structure induced by the action of $G$ on $\tilde{\Omega}$. Then $\Age(\mathcal{M})$ satisfies the strong amalgamation property if and only if $n = 1$. Indeed, for $n \geqslant 2$, we can take distinct elements $x_1, \ldots, x_n \in \Omega$, and let $H$ be the stabilizer subgroup of $\mathbf x = (x_1, \ldots, x_n) \in \tilde{\Omega}$. Then $H$ also fixes $(x_1, \ldots, x_1) \in \tilde{\Omega} \setminus \{ \mathbf x \}$. Consequently, $\mathcal{A}$ is not isomorphic to $\mathcal{G}^{\op}$ for $n \geqslant 2$. Indeed, $\mathcal{A}$ varies for different $n$. However, since stabilizer subgroups of finite subsets of $\tilde{\Omega}$ coincide with stabilizer subgroups of finite subsets of $\Omega$, it follows that $\mathcal{G}^{\op}$ is independent of $n$, which is always isomorphic to the category of finite subsets of $\Omega$ and injections.
\end{example}

The \textit{self-embedding functor} and \textit{shift functor} induced by it, play a central role in the representation theory of the category of finite sets and injections and other related categories; see for instances \cite{CEFN, GL, GS, Nagpal}. In the rest of this section we give a characterization of the existence of this kind of embedding functors in terms of homogeneity of group actions.

Let $x$ be a fixed element in $\Omega$, $\Omega_x = \Omega \setminus \{x\}$, and $G_x = \Stab_G(x)$. Let $\Age(\mathcal{M})_x$ be the set of finite substructures $\Gamma$ of $\mathcal{M}$ such that $x \notin \Gamma$, and $\mathcal{A}_x$ the full subcategory of $\mathcal{A}$ with object set $\Age(\mathcal{M})_x$. There is a natural map
\[
\iota: \Age(\mathcal{M})_x \longrightarrow \Age(\mathcal{M}), \quad \Gamma \mapsto \Gamma \sqcup \{x\}.
\]
Thus one may ask under what conditions it induces an embedding functor $\iota: \mathcal{A}_x \to \mathcal{A}$ such that for $f: \Gamma \to \Sigma$ in $\mathcal{A}_x$, $\iota(f)$ is the morphism
\[
\Gamma \sqcup \{x\} \longrightarrow \Sigma \sqcup \{x\}
\]
extending $f$ and fixing $x$.

\begin{proposition}
The map $\iota$ induces such an embedding functor if and only if $\Age(\mathcal{M})_x$ is homogeneous with respect to the action of $G_x$; that is, every embedding between two members in $\Age(\mathcal{M})_x$ extends to an element in $G_x$.
\end{proposition}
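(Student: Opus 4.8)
The plan is to prove both directions by directly unwinding what it means for $\iota$ to be a functor. The key point is that for $\iota$ to be well-defined on morphisms, the extension of $f\colon\Gamma\to\Sigma$ fixing $x$ must exist \emph{as a morphism in $\mathcal{A}$}, i.e.\ must be an embedding of the structure $\Gamma\sqcup\{x\}$ into $\Sigma\sqcup\{x\}$; and by homogeneity of $\mathcal{M}$ every such morphism in $\mathcal{A}$ is realized by an element of $G$. So I would first show the ``only if'' direction: assume $\iota$ is an embedding functor. Given an embedding $f\colon\Gamma\to\Sigma$ with $\Gamma,\Sigma\in\Age(\mathcal{M})_x$, the morphism $\iota(f)\colon\Gamma\sqcup\{x\}\to\Sigma\sqcup\{x\}$ is by hypothesis a genuine morphism in $\mathcal{A}$ extending $f$ and fixing $x$. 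By homogeneity of $\mathcal{M}$ (as in the proof of Theorem \ref{orbit categories}), $\iota(f)$ extends to some $\tilde{\sigma}\in G$; since $\tilde{\sigma}(x)=x$ we have $\tilde{\sigma}\in G_x$, and $\tilde{\sigma}$ restricts to an extension of $f$ inside $G_x$. Hence $\Age(\mathcal{M})_x$ is homogeneous with respect to $G_x$.

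For the ``if'' direction, suppose $\Age(\mathcal{M})_x$ is $G_x$-homogeneous. I need to check that $\iota$ is well-defined on morphisms and is functorial. Given $f\colon\Gamma\to\Sigma$ in $\mathcal{A}_x$, by hypothesis $f$ extends to some $g\in G_x$; then $g$ maps $\Gamma\sqcup\{x\}$ into $\Sigma\sqcup\{x\}$, fixes $x$, and restricts to $f$ on $\Gamma$, so it is an embedding $\Gamma\sqcup\{x\}\to\Sigma\sqcup\{x\}$ in $\mathcal{A}$ agreeing with $f$ on $\Gamma$ and fixing $x$. This candidate for $\iota(f)$ is \emph{uniquely} determined by the conditions ``extends $f$, fixes $x$'' because its values on $\Gamma$ and on $x$ are prescribed and $\Gamma\sqcup\{x\}$ has no other points; so $\iota(f)$ is well-defined independently of the choice of extending element $g$. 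Functoriality ($\iota(\mathrm{id})=\mathrm{id}$ and $\iota(f'\circ f)=\iota(f')\circ\iota(f)$) is then immediate from this uniqueness, since $\iota(f')\circ\iota(f)$ extends $f'\circ f$ and fixes $x$. Faithfulness of $\iota$ is clear (two morphisms in $\mathcal{A}_x$ with the same image agree on $\Gamma$, hence are equal), which is what makes it an embedding functor in the intended sense.

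The only genuinely delicate point is verifying that $\Gamma\sqcup\{x\}$ really \emph{is} a substructure of $\mathcal{M}$ and that the prescribed map is an embedding of relational structures, not merely a set-map fixing $x$; but this is automatic, since any subset of $\Omega$ carries the induced substructure and any restriction of an element of $G$ to a subset is an embedding of the induced substructures. Thus I do not expect a serious obstacle here: the content of the proposition is essentially the observation that ``$\iota$ is a functor into $\mathcal{A}$'' and ``morphisms of $\mathcal{A}$ fixing $x$ come from $G_x$'' are two ways of saying the same thing, with homogeneity of $\mathcal{M}$ (already available) bridging between morphisms of $\mathcal{A}$ and elements of $G$. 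The main care needed is simply to phrase the uniqueness argument cleanly so that well-definedness and functoriality fall out together.
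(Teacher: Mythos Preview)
Your proposal is correct and follows essentially the same route as the paper: both directions hinge on homogeneity of $\mathcal{M}$ to pass between morphisms in $\mathcal{A}$ and elements of $G$, and the ``if'' direction defines $\iota(f)$ by restricting an extending element of $G_x$. Your treatment is in fact slightly more careful than the paper's, which simply asserts ``clearly, this construction is functorial''; your uniqueness argument for $\iota(f)$ makes well-definedness and functoriality transparent.
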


\begin{proof}
\textbf{The if direction.} Suppose that $\Age(\mathcal{M})_x$ is homogeneous with respect to $G_x$. Given a morphism $f:\Gamma \to \Sigma$ in $\mathcal{A}_x$, by homogeneity, there exists $\tilde f \in G_x$ extending $f$, so we can define $\iota(f)$ to be its restriction to $\Gamma \sqcup \{x\}$. Then $\iota(f)$ is an embedding $\Gamma \sqcup \{x\} \to \Sigma \sqcup \{x\}$ with $\iota(f)(x) = x$. Clearly, this construction is functorial, so $\iota$ is a functor.

\textbf{The only if direction.} Suppose that $\iota$ defines an embedding functor. Given an embedding $f:\Gamma \to \Sigma$ in $\mathcal{A}_x$, we obtain an embedding
\[
\iota(f): \Gamma \sqcup \{x\} \to \Sigma \sqcup \{x\}
\]
in $\mathcal{A}$ fixing $x$. By the homogeneity of $\mathcal{M}$, $\iota(f)$ extends to some $\tilde f \in G$. Since $\iota(f)(x) = x$, it follows that $\tilde f \in G_x$. Thus $f$ extends to an element of $G_x$, showing that $\Age(\mathcal{M})_x$ is homogeneous with respect to $G_x$.
\end{proof}

\begin{remark}
The action of $G_x$ on $\Omega_x$ also induces a canonical relational structure $\mathcal{M}_x$ on $\Omega_x$. Using standard arguments in Fr\"{a}iss\'{e} theory, the reader can show the equivalence of the following statements:
\begin{enumerate}
\item the map $\iota$ induces such an embedding functor from $\mathcal{A}_x$ to $\mathcal{A}$;

\item $\Age(\mathcal{M})_x$ is homogeneous with respect to the action of $G_x$;

\item $\Age(\mathcal{M}_x) = \Age(\mathcal{M})_x$;

\item $G_x$ is a dense subgroup of $\Aut(\mathcal{M}_x)$.
\end{enumerate}
\end{remark}

\begin{example}
We use a few examples to illustrate the above result.
\begin{enumerate}
\item Let $\Omega = \mathbb{N}$, $G = \Sym(\mathbb{N})$, and $x = 0$. Then $G_x = \Sym(\mathbb{Z_+})$ acts highly transitively on $\Omega_x = \mathbb{Z}_+$. In this case the embedding functor $\iota$ is precisely the one constructed in \cite{CEFN}.

\item Let $\Omega = \mathbb{Q}$, $G = \Aut(\mathbb{Q}, \leqslant)$, and $x = 0$. The action of $G_x$ on $\Omega_x$ is no longer highly homogeneous (actually, it is not even transitive). In this case the map $\iota$ does not induces an embedding functor from $\mathcal{A}_x$ to $\mathcal{A}$ since the isomorphism $\{-1\} \to \{1\}$ cannot extend to an isomorphism in $G_x$.

\item One can construct a self-embedding functor from the category of finite linearly ordered sets and order-preserving injections to itself via adding an extra minimal element; see \cite{GL, GS}. The reader may expect to use a similar construction as well as the above proposition to recover this functor. Unfortunately, this is not the case. Explicitly, let $\Omega = \mathbb{Q} \sqcup \{x\}$ and extend the linear order on $\mathbb{Q}$ to a linear order $\leqslant$ on $\Omega$ such that $x$ is the minimal element. Let $G = \Aut(\Omega, \leqslant)$. Then $G_x = G = \Aut(\mathcal{M}_x)$. Consequently, by the previous proposition we indeed obtain an embedding functor $\iota$ from $\mathcal{A}_x$ to $\mathcal{A}$. But this is completely different from the self-embedding functor in \cite{GL, GS}.

    The reason to cause this difference is subtle. If we consider the canonical relational structure $\mathcal{M}$ on $\Omega$, then the map $\{x \} \to \{r\}$ with $r \in \mathbb{Q}$ is not an isomorphism of finite substructures since $x$ and $r$ lie in distinct $G$-orbits. On the other hand, if we consider the relational structure $\mathcal{M}'$ consisting of the single binary relation, namely the linear order on $\Omega$, then this map is indeed an isomorphism of finite substructures. Consequently, if we define $\mathcal{A}'$ and $\mathcal{A}'_x$ for $\Age(\mathcal{M}')$ and $\Age(\mathcal{M}')_x$ respectively, then the self-embedding functor in \cite{GL, GS} is an embedding functor from $\mathcal{A}'_x$ to $\mathcal{A}'$, and can be obtained via the construction described before the previous proposition. But it is not an embedding functor from $\mathcal{A}_x$ to $\mathcal{A}$ since $\Age(\mathcal{M})$ and $\Age(\mathcal{M}')$ are different, though $\mathcal{M}'$ completely determines $\mathcal{M}$ because $\Aut(\Omega, \mathcal{M}) = \Aut(\Omega, \mathcal{M}')$.
\end{enumerate}
\end{example}

\section{Noetherianity}

Throughout this section without loss of generality let $G$ be a subgroup of $\Sym(\Omega)$, and suppose that the action of $G$ on $\Omega$ is highly homogeneous. Let $k$ a commutative Noetherian ring, and $R = k[\Omega]$. The following classification of highly homogeneous groups was proved by Cameron in \cite{Ca76}.

\begin{theorem}[Theorem 3.10 \cite{Ca}] \label{classification}
Suppose that the action of $G$ on $\Omega$ is highly homogeneous. Then $G$ is a dense subgroup (in the topological sense) of the following automorphism groups:
\begin{enumerate}
\item $\Sym(\Omega)$;

\item $\Aut(\Omega, \leqslant)$ where $\leqslant$ is an unbounded dense linear order on $\Omega$,

\item $\Aut(\Omega, \mathcal{B})$ where $\mathcal{B}$ is a dense (in the sense of order theory) linear betweenness relation,

\item $\Aut(\Omega, \mathcal{K})$ where $\mathcal{K}$ is a dense (in the sense of order theory) cyclic order,

\item $\Aut(\Omega, \mathcal{S})$ where $\mathcal{S}$ is a dense (in the sense of order theory) separation relation.
\end{enumerate}
\end{theorem}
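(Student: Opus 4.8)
The plan is to carry out Cameron's classification of highly set-transitive permutation groups, reading ``is a dense subgroup of'' through the criteria of Sections~2--3. First I would replace $G$ by its closure $\overline{G}\leqslant\Sym(\Omega)$: since $G$ is dense in $\overline{G}$, Proposition~\ref{topological characterization} shows that $G$ and $\overline{G}$ induce the same relational structure, so $f_{\overline{G},\Omega}=f_{G,\Omega}\equiv 1$, and it suffices to identify the closed group $\overline{G}$ with one of the five listed groups. Next I split on whether $F_{G,\Omega}\equiv 1$. If it is, then given any finite $\Gamma\subseteq\Omega$ and any $\sigma\in\Sym(\Omega)$, enumerating $\Gamma$ and $\sigma(\Gamma)$ produces two tuples in $\Omega^{(|\Gamma|)}$ lying in the same $G$-orbit, so some $g\in G$ agrees with $\sigma$ on $\Gamma$; hence $G$ is dense in $\Sym(\Omega)$ and, being closed, equal to it --- case~(1).

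Otherwise, since $f_{G,\Omega}(1)=1$ forces $F_{G,\Omega}(1)=1$, there is a minimal $n\geqslant 2$ with $F_{G,\Omega}(n)>1$; thus $G$ is $(n-1)$-transitive but not $n$-transitive. Because $f_{G,\Omega}(n)=1$, $G$ is transitive on $\Omega^{\{n\}}$, so the setwise stabiliser of a fixed $n$-subset $A$ induces a proper subgroup $\Gamma_{n}\leqslant\Sym(A)\cong S_{n}$ with $F_{G,\Omega}(n)=[S_{n}:\Gamma_{n}]$, and the $\Gamma_{n}$-invariant pattern borne by an $n$-subset is a $G$-invariant $n$-ary relation $\mathcal{R}$ on $\Omega$. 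The core of the proof is a compatibility (amalgamation) analysis, using transitivity of $G$ on $\Omega^{\{n+1\}}$, $\Omega^{\{n+2\}}$ and on ordered $(n-1)$-tuples, of how the copies of $\mathcal{R}$ on the $n$-subsets of an $(n+1)$- or $(n+2)$-subset can be glued; it forces $n\in\{2,3,4\}$. For $n=2$, $\Gamma_{2}$ is trivial, $\Omega^{(2)}$ has exactly two orbits, and $\mathcal{R}$ is a $G$-invariant tournament; since no tournament on four points has all of its triangles cyclic, transitivity on $\Omega^{\{3\}}$ makes every triangle transitive, so $\mathcal{R}$ is a strict linear order $<$; on a set of size $\geqslant 3$ not every pair can be covering, and the covering pairs form a $G$-invariant subset of the single orbit $\{(x,y):x<y\}$, so there are none and $<$ is dense, while transitivity of $G$ on $\Omega$ excludes extreme points and makes $<$ unbounded --- case~(2). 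For $n=3$ the analysis leaves $\Gamma_{3}=A_{3}$, giving a dense unbounded cyclic order (case~(4)), or $\Gamma_{3}$ of order $2$ generated by a transposition, giving a dense linear betweenness relation (case~(3)), the trivial possibility being excluded by the same coherence analysis; and $n=4$ leaves a separation relation (case~(5)). Finally, these coherence constraints rule out $n\geqslant 5$, so a $4$-transitive highly homogeneous group is $m$-transitive for all $m$ and is already covered by case~(1).

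It then remains to promote ``$G$ preserves $\mathcal{R}$'' to ``$G$ is dense in $\Aut(\Omega,\mathcal{R})$'' in cases (2)--(5). Since $G\leqslant\Aut(\Omega,\mathcal{R})$ is highly homogeneous, so is $\Aut(\Omega,\mathcal{R})$; moreover the classification shows that $\mathcal{R}$ is the canonical relational structure of $G$ --- for instance, for the linear order a setwise stabiliser of a finite subset fixes it pointwise in both groups, and one checks analogously in the other cases that the finite permutation groups induced by the two groups agree --- so $G$ and $\Aut(\Omega,\mathcal{R})$ share the same orbits on every $\Omega^{(m)}$, hence on every $\Omega^{m}$ by Lemma~\ref{orbits of finite cartesian products}, and Proposition~\ref{topological characterization} gives the asserted density.

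The hard part is the compatibility analysis of the second paragraph: simultaneously ruling out $n\geqslant 5$, discarding the trivial and over-large candidates for $\Gamma_{n}$ when $n=3,4$, and correctly identifying the surviving relation (betweenness versus cyclic order for $n=3$, separation for $n=4$). This is a delicate finite case analysis on permutation groups of small degree together with amalgamation conditions relating consecutive levels of $\Age(\mathcal{M})$; everything else --- the $\Sym(\Omega)$ case, the tournament-to-order step, and the density upgrades via Proposition~\ref{topological characterization} --- is routine given the results of Sections~2--3.
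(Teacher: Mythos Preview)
The paper does not prove this theorem at all: it is stated with the citation ``Theorem~3.10 \cite{Ca}'' and immediately used as a black box, with the reader referred to \cite{BMMN, Ca76, Ca} for definitions and details. There is therefore no ``paper's own proof'' to compare against; what you have written is an outline of Cameron's original argument from \cite{Ca76}, adapted so that the conclusion is phrased in terms of density via the paper's Proposition~\ref{topological characterization} and Lemma~\ref{orbits of finite cartesian products}.

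As a sketch of Cameron's proof your outline is broadly faithful: the reduction to the closed group, the dichotomy on high transitivity, the introduction of the induced subgroup $\Gamma_n\leqslant S_n$ at the least $n$ where $n$-transitivity fails, and the identification of the invariant relation in each surviving case are all the right moves, and you correctly flag the finite case analysis (bounding $n\leqslant 4$ and eliminating the spurious $\Gamma_n$'s) as the substantive step. One caution: your claim that ``$\mathcal{R}$ is the canonical relational structure of $G$'' is slightly loose --- what is actually needed is that $G$ and $\Aut(\Omega,\mathcal{R})$ have the same orbits on each $\Omega^{(m)}$, which in cases (2)--(5) amounts to checking that $\Aut(\Omega,\mathcal{R})$ itself has $F_{\Aut(\Omega,\mathcal{R}),\Omega}(m)=|S_m:\Gamma_m^{(m)}|$ for the appropriate tower of induced groups, not merely that both are highly homogeneous. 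This is true but deserves a sentence rather than the phrase ``one checks analogously''. In any case, for the purposes of the present paper none of this is required: the author is content to quote the result.
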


For explicit definitions and examples of these relations, please refer to \cite{BMMN, Ca76, Ca}.

By this result and Theorem \ref{res is an equivalence}, to consider discrete representations of $G$, without loss of generality one may assume that $G$ is one of the above five groups.

\begin{lemma}
The strong amalgamation property holds for $\Age(\mathcal{M})$.
\end{lemma}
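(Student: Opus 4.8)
The plan is to use the group-theoretic reformulation of the strong amalgamation property supplied by Lemma~\ref{strong amalgamation}: $\Age(\mathcal{M})$ has the strong amalgamation property if and only if, for every finite $\Gamma\subseteq\Omega$, the pointwise stabilizer $\Stab_G(\Gamma)$ fixes no point of $\Omega\setminus\Gamma$. So it suffices to verify this last condition, and the key reduction is to pass, via Cameron's classification (Theorem~\ref{classification}), to the five explicit groups occurring there. Indeed, by that theorem $G$ is a dense subgroup of one of
\[
\widetilde{G}\ \in\ \{\ \Sym(\Omega),\ \Aut(\Omega,\leqslant),\ \Aut(\Omega,\mathcal{B}),\ \Aut(\Omega,\mathcal{K}),\ \Aut(\Omega,\mathcal{S})\ \},
\]
and by Proposition~\ref{topological characterization} density of $G$ in $\widetilde{G}$ means precisely that $G$ and $\widetilde{G}$ share all orbits on the $\Omega^{n}$'s; hence they induce the same canonical relational structure, so $\mathcal{M}$ and $\Age(\mathcal{M})$ are unchanged if we replace $G$ by $\widetilde{G}$. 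Applying Lemma~\ref{strong amalgamation} to $\widetilde{G}$, the assertion becomes: for each of the five groups $\widetilde{G}$ and every finite $\Gamma\subseteq\Omega$, the stabilizer $\widetilde{G}_{\Gamma}$ fixes no point of $\Omega\setminus\Gamma$.

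To carry out the per-case verification, fix such a $\widetilde{G}$, a finite $\Gamma$, and $x\in\Omega\setminus\Gamma$; the goal is to produce $\sigma\in\widetilde{G}_{\Gamma}$ with $\sigma(x)\neq x$. The uniform recipe is: (i) describe the \emph{cell} of $x$ relative to $\Gamma$, namely the set of $y\in\Omega$ for which the map $\mathrm{id}_{\Gamma}\cup\{x\mapsto y\}$ is an isomorphism of the substructures $\Gamma\cup\{x\}$ and $\Gamma\cup\{y\}$ of $\mathcal{M}$; (ii) use the relevant density hypothesis to see that this cell is infinite, so it contains some $x'\neq x$; (iii) extend the partial isomorphism $\mathrm{id}_{\Gamma}\cup\{x\mapsto x'\}$ to an element $\sigma\in\widetilde{G}$ using the homogeneity of $\mathcal{M}$, which holds for $\widetilde{G}$ since $\mathcal{M}$ is equally the canonical relational structure attached to $\widetilde{G}$. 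For $\widetilde{G}=\Sym(\Omega)$ this is immediate: the cell of $x$ is $\Omega\setminus\Gamma$, so we may take $\sigma$ to be the transposition swapping $x$ with any $x'\in\Omega\setminus(\Gamma\cup\{x\})$. For $\widetilde{G}=\Aut(\Omega,\leqslant)$ with $\leqslant$ an unbounded dense linear order, the cell of $x$ is the open interval bounded by the two $\leqslant$-neighbours of $x$ inside $\Gamma$ (reading $-\infty$, resp.\ $+\infty$, when $x$ lies below, resp.\ above, every element of $\Gamma$); it is a nonempty open interval of a dense order, hence infinite, which produces $x'$, while homogeneity of $\mathcal{M}$—here the statement that $\widetilde{G}$ is transitive on strictly increasing finite tuples—produces $\sigma$. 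The betweenness, cyclic order, and separation cases go the same way, with the cell of $x$ being the unique ``arc'' cut out by $\Gamma$ that contains $x$ (or all of $\Omega\setminus\Gamma$ when $\Gamma$ is too small to cut any arc), which is infinite precisely because $\mathcal{B}$, $\mathcal{K}$, resp.\ $\mathcal{S}$ is dense.

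The only genuine work lies in this last case analysis, specifically in writing down carefully what the ``cell of $x$ relative to $\Gamma$'' is for a linear betweenness relation, a cyclic order, and a separation relation, and checking in each case that density forces the cell to be infinite; none of these is difficult, but each requires unwinding the (standard) axioms of the relation in question, so this is where I expect the bookkeeping to be. A reader preferring to avoid the computation can instead appeal to the combinatorial literature: by Theorem~\ref{classification} and Proposition~\ref{topological characterization}, $\Age(\mathcal{M})$ is one of the five classes of finite sets, finite linear orders, finite betweenness structures, finite circular orders, and finite separation structures, each of which is classically known (see \cite{Ca}) to satisfy the strong amalgamation property; this trades the stabilizer computation for a citation.
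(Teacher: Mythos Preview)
Your argument is correct and shares the paper's overall strategy: reduce via Cameron's classification and Proposition~\ref{topological characterization} to the five model groups $\widetilde{G}$, then use the criterion of Lemma~\ref{strong amalgamation} and check that $\widetilde{G}_{\Gamma}$ has no fixed point outside~$\Gamma$. The difference is tactical. You carry out (or at least sketch) the cell-and-homogeneity computation separately in each of the five cases, whereas the paper avoids most of this by invoking the subgroup chains $\Aut(\Omega,\leqslant)\leqslant\Aut(\Omega,\mathcal{B})\leqslant\Sym(\Omega)$ and $\Aut(\Omega,\mathcal{K})\leqslant\Aut(\Omega,\mathcal{S})$ from \cite{BMMN}: since a fixed point of $\widetilde{G}_{\Gamma}$ is a fortiori a fixed point of $H_{\Gamma}$ for any subgroup $H\leqslant\widetilde{G}$, it suffices to verify the no-fixed-point condition for the two minimal groups $\Aut(\Omega,\leqslant)$ and $\Aut(\Omega,\mathcal{K})$. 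The paper then handles $\Aut(\Omega,\mathcal{K})$ by a further reduction, embedding $\Aut(\Omega_x,\leqslant)$ into it via \cite[Theorem~11.11]{BMMN}, so that ultimately only the dense-linear-order case needs a direct orbit computation. Your approach is more self-contained and makes the geometry of the orbits explicit in each case; the paper's buys brevity at the cost of citing the structural results in \cite{BMMN}.
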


\begin{proof}
By \cite[Theorems 11.7, 11.9, and 11.11]{BMMN}, one has
\[
\Aut(\Omega, \leqslant) \leqslant \Aut(\Omega, \mathcal{B}) \leqslant \Sym(\Omega), \quad \Aut(\Omega, \mathcal{K}) \leqslant \Aut(\Omega, \mathcal{S}) \leqslant \Sym(\Omega).
\]
By Lemma \ref{strong amalgamation}, it suffices to check the conclusion for $\Aut(\Omega, \leqslant)$ and $\Aut(\Omega, \mathcal{K})$. For $G = \Aut(\mathbb{Q}, \leqslant)$, this is clear. Indeed, given a finite subset $\Gamma \subseteq \Omega$ and list its elements as $x_1 < x_2 < \ldots < x_n$, then the orbits of $G_{\Gamma}$ on $\Omega \setminus \Gamma$ are precisely open intervals $(-, x_1)$, $(x_1, x_2)$, $\ldots$, and $(x_n, -)$, all of which are infinite sets since the linear order is dense and unbounded.

Now suppose that $G = \Aut(\Omega, \mathcal{K})$. By \cite[Theorem 11.11]{BMMN}, for a fixed $x \in \Omega$ and $\Omega_x = \Omega \setminus \{ x \}$, one has $H = \Aut(\Omega_x, \leqslant) \leqslant G$, where the restricted linear order $\leqslant$ on $\Omega_x$ is also unbounded and dense. Now if $\Gamma$ is a finite subset of $\Omega$ such that $G_{\Gamma}$ has a fixed point $y \in \Omega \setminus \Gamma$, then $y$ is also fixed by $H_{\Gamma} = H \cap G_{\Gamma}$. But we just proved in the previous paragraph (replace $\Omega$ by $\Omega_x$ and $G$ by $H$) that $H_{\Gamma}$ has no fixed point outside $\Gamma$, so $y \in \Gamma$. The conclusion then follows from this contradiction.
\end{proof}

By Theorem \ref{orbit categories}, the category $\mathcal{G}^{\op}$ is isomorphic to $\mathcal{A}$ consisting of finite substructures of $\mathcal{M}$ and embeddings. But in this case two finite substructures are isomorphic if and only if they have the same cardinality,\footnote{Indeed, if the action is highly homogeneous, then for any two finite subsets with the same cardinality, we can find an element $g \in G$ sending one set to another, so they are isomorphic. Conversely, if two finite substructures are isomorphic, they of course have the same cardinality. The reader can see that this `if and only if' statement actually characterizes the highly homogeneous property by noting that every isomorphism between two finite substructures is given by the restriction of an element in $G$.} so $\mathcal{A}$ is equivalent to one of the following categories:
\begin{itemize}
\item $\FI$: the category with objects $[n]$, $n \in \mathbb{N}$, and injections;
\item $\OI$: the category with objects $[n]$ equipped with the canonical linear order, $n \in \mathbb{N}$, and order-preserving injections;
\item $\BI$: the category with objects $[n]$ equipped with the canonical linear betweenness relation, $n \in \mathbb{N}$, and relation-preserving injections;
\item $\CI$: the category with objects $[n]$ equipped with the canonical cyclic order, $n \in \mathbb{N}$, and order-preserving injections;
\item $\SI$: the category with objects $[n]$ equipped with the canonical separation relation, $n \in \mathbb{N}$, and relation-preserving injections.
\end{itemize}

Denote one of the above categories by $\C$. Since $\mathcal{A}$ and $\C$ satisfies the right Ore condition, we can impose the atomic Grothendieck topology $J_{at}$ on them, and call $(\mathcal{A}^{\op}, J_{at})$ and $(\C^{\op}, J_{at})$ the atomic sites. By \cite[Proposition 2.1 and Theorem 2.2]{LPY}, there are structure sheaves $\tilde{\mathcal{R}}$ and $\mathcal{R}$ on these sites such that
\[
RG \Mod^{\dis} \simeq \Sh(\mathcal{A}^{\op}, J_{at}, \tilde{\mathcal{R}}) \simeq \Sh(\C^{\op}, J_{at}, \mathcal{R}),
\]
the category of sheaves of modules over the ringed sites. For details, see \cite{Jo, LPY, MM}.

The following lemma gives an explicit description of $\mathcal{R}$.

\begin{lemma} \label{structure sheaf}
The structure sheaf $\tilde{\mathcal{R}}$ over $(\mathcal{A}{\op}, J_{at})$ has the following description:
\begin{itemize}
\item for each finite subset $\Gamma \subseteq \Omega$, the value $\mathcal{R}(\Gamma)$ of $\mathcal{R}$ on $\Gamma$ is $k[\Gamma]$;

\item for each embedding $\phi: \Gamma \to \Sigma$, the induced ring homomorphism is
\[
\mathcal{R}(\phi): k[\Gamma] \to k[\Sigma], \quad x \mapsto \phi(x).
\]
\end{itemize}
\end{lemma}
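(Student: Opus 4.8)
The plan is to read off both assertions from Theorem \ref{orbit categories} together with the description of the structure sheaf on the orbit category recorded in \cite[Section 2]{LPY}. By the preceding lemma, $\Age(\mathcal{M})$ has the strong amalgamation property, so Theorem \ref{orbit categories} supplies an isomorphism of sites identifying $(\mathcal{A}^{\op}, J_{at})$ with $(\mathcal{G}, J_{at})$, under which $\Gamma \in \Ob(\mathcal{A})$ corresponds to $G/G_{\Gamma} \in \Ob(\mathcal{G})$. Transporting $\tilde{\mathcal{R}}$ along this isomorphism, it suffices to recall from \cite[Section 2]{LPY} that the structure sheaf takes the object $G/G_{\Gamma}$ to the invariant ring $R^{G_{\Gamma}} = \{\, r \in R \mid g \cdot r = r, \ \forall\, g \in G_{\Gamma} \,\}$, and then (i) to identify $R^{G_{\Gamma}}$ with $k[\Gamma]$, and (ii) to unwind what the transported sheaf does on morphisms.

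For (i), recall that in this section $R = k[\Omega]$ with $G$ acting by permuting the indeterminates and fixing $k$ pointwise; hence $k[\Gamma] \subseteq R^{G_{\Gamma}}$ is immediate, since $G_{\Gamma}$ fixes every element of $\Gamma$. For the reverse inclusion, let $P \in R^{G_{\Gamma}}$ and let $V(P) \subseteq \Omega$ be the finite set of indeterminates occurring in $P$. Each $g \in G_{\Gamma}$ acts on $R$ by the substitution $x \mapsto g \cdot x$, which is injective on indeterminates, so $V(g \cdot P) = g(V(P))$; since $g \cdot P = P$ this gives $g(V(P)) = V(P)$, i.e.\ $V(P)$ is $G_{\Gamma}$-stable. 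If some $y \in V(P)$ lay outside $\Gamma$, its entire $G_{\Gamma}$-orbit would be contained in $V(P)$, hence finite, contradicting Lemma \ref{strong amalgamation}(2), which applies because $\Age(\mathcal{M})$ has the strong amalgamation property. Therefore $V(P) \subseteq \Gamma$, so $P \in k[\Gamma]$, and $R^{G_{\Gamma}} = k[\Gamma]$.

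For (ii), fix an embedding $\psi\colon \Gamma \to \Sigma$ in $\mathcal{A}$. By homogeneity of $\mathcal{M}$ it extends to some $\tilde\psi \in G$, and one checks $\tilde\psi^{-1} G_{\Sigma} \tilde\psi \leqslant G_{\Gamma}$, so $\psi$ corresponds under $\phi$ to the morphism $G/G_{\Sigma} \to G/G_{\Gamma}$, $g G_{\Sigma} \mapsto g \tilde\psi G_{\Gamma}$, in $\mathcal{G}$ (exactly as in the proof of Theorem \ref{orbit categories}). By the construction of the structure sheaf in \cite[Section 2]{LPY}, the induced map on sections $R^{G_{\Gamma}} \to R^{G_{\Sigma}}$ is $r \mapsto \tilde\psi \cdot r$; this is independent of the chosen extension $\tilde\psi$ because two extensions differ by an element of $G_{\Gamma}$, which acts trivially on $R^{G_{\Gamma}}$, and it does land in $R^{G_{\Sigma}}$ by a one-line conjugation check. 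Under the identifications from (i), $r \mapsto \tilde\psi \cdot r$ carries an indeterminate $x \in \Gamma$ to $\tilde\psi(x) = \psi(x) \in \Sigma$, so it is precisely the ring homomorphism $k[\Gamma] \to k[\Sigma]$, $x \mapsto \psi(x)$. The remaining verifications — that $\phi$ respects the Grothendieck topologies and that $r \mapsto \tilde\psi \cdot r$ is functorial in $\psi$ — are routine or already contained in Theorem \ref{orbit categories} and \cite{LPY}. The only step with genuine content is the invariant-ring computation in (i), whose crux is exactly the strong amalgamation property established in the previous lemma; so I anticipate no real obstacle beyond keeping \cite{LPY}'s direction conventions for the orbit-category structure sheaf straight.
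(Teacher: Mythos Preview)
Your proposal is correct and follows essentially the same approach as the paper: both identify $\mathcal{R}(\Gamma)$ with the invariant ring $R^{G_{\Gamma}}$ via \cite[Section~2]{LPY}, and both prove $R^{G_{\Gamma}} = k[\Gamma]$ by arguing that any indeterminate of a $G_{\Gamma}$-fixed polynomial lying outside $\Gamma$ would force a finite $G_{\Gamma}$-orbit in $\Omega \setminus \Gamma$, contradicting strong amalgamation. Your treatment of part~(ii) is more explicit than the paper's (which simply declares it ``routine to check''), but the content is the same.
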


\begin{proof}
Take $\Gamma$ be a finite subset of $\Omega$. By \cite[Section 2]{LPY}, $\mathcal{R}(\Gamma)$ is the set of fixed points of $R$ by $G_{\Gamma}$; that is,
\[
\mathcal{R} ([n]) = \{P \in k[\Omega] \mid g \cdot P = P, \, \forall g \in G_{\Gamma} \}.
\]
Clearly, $k[\Gamma] \subseteq \mathcal{R} (\Gamma)$. On the other hand, if the reverse inclusion is not true, then we can find a polynomial $P \in k[\Omega]$ such that $P$ contains an indeterminate $x \in \Omega \setminus \Gamma$. Since any $g \in G_{\Gamma}$ fixes $P$, every element in the orbit $G_{\Gamma} \cdot x$ shall appear in $P$. But $G$ satisfies the strong amalgamation property, so this orbit is infinite, and hence $P$ have infinitely many indeterminates. This is absurd. Thus the reverse inclusion holds.

We have proved that $\mathcal{R}(\Gamma) = k[\Gamma]$. It is routine to check the second part of the description based on the explicit construction described in \cite[Secton 2]{LPY}.
\end{proof}

By Lemma \ref{structure sheaf}, the structure sheaf $\mathcal{R}$ over the atomic site $(\C^{\op}, J_{at})$ has the following description: $\mathcal{R}_n = k[x_1, \ldots, x_n]$ for every $n \in \mathbb{N}$, and a morphism $\epsilon \in \C([m], [n])$ gives the following ring homomorphism
\[
\epsilon^{\ast}: k[x_1, \ldots, x_m] \longrightarrow k[x_1, \ldots, x_n], \quad x_i \mapsto \epsilon(x_i).
\]

\begin{example}
Let $\Omega  = \mathbb{Z}+$, $G = \Sym(\Omega)$, $\tilde{\Omega}$ the Cartesian product of $d$ copies of $\Omega$, and $R = k[\tilde{\Omega}]$. Then the corresponding structure sheaf $\mathcal{R}$ is precisely $\mathbf{X}^{\FI, d}$ defined in \cite[Definition 2.17]{NR}.
\end{example}

Denote by $\PSh(\C^{\op}, J_{at}, \mathcal{R})$ the presheaf category and the sheaf category over the ringed site $(\C^{\op}, J_{at}, \mathcal{R})$. It has a family of generators $\{ P_n \mid n \in \mathbb{N} \}$ defined as follows:
\begin{itemize}
\item for $s \in \mathbb{N}$, the value of $P_n$ on $[s]$ is
\[
P_n([s]) = \bigoplus_{\epsilon \in \C([n], [s])} \mathcal{R}_n \epsilon
\]
with $\mathcal{R}_n \epsilon$ the free $\mathcal{R}_n$-module with the basis element $\epsilon$;

\item for a morphism $\pi: [s] \to [r]$ in $\C$, the corresponding map $P_n([s]) \to P_n([r])$ is defined via
\[
a \epsilon \mapsto \pi^{\ast}(a) (\pi \circ \epsilon), \quad a \in P_n([s]), \, \epsilon \in \C([n], [s]).
\]
\end{itemize}
This is indeed a family of generators for $\PSh(\C^{\op}, J_{at}, \mathcal{R})$: the case that $\C = \FI$ or $\C = \OI$ is proved in \cite[Definition 3.16 and Proposition 3.18]{NR}, and the reader can prove the conclusion for the other cases. It is also a family of generators for $\Sh(\C^{\op}, J_{at}, \mathcal{R})$ since each $P_n$ is the sheaf corresponding to the discrete $RG$-module $R(G/G_{\Gamma})$ where $\Gamma$ is a subset of $\Omega$ with cardinality $n$; see \cite[Subsection 3.2]{LPY} for more details.

\begin{definition}
An object $V$ in $\PSh(\C, J_{at}, \mathcal{R})$ or $\Sh(\C, J_{at}, \mathcal{R})$ is said to be \textit{finitely generated} if there is an epimorphism
\[
\bigoplus_{n \in \mathbb{N}} P_n^{c_n} \longrightarrow V
\]
such that the sum of multiplicities $c_n$ is finite; it is Noetherian if every subobject of $V$ is finitely generated as well.
\end{definition}

\begin{remark} \label{equivalent definition}
Another definition is: $V$ is finitely generated if there exists a finite set of elements $v_i \in V([n_i])$, $i \in [s]$, such that the only subobject of $V$ containing these elements is $V$ itself. These two definitions are equivalent, see \cite[Proposition 3.18]{NR} and \cite[Proposition 3.7]{LPY}.
\end{remark}

Now we are ready to prove the following result.

\begin{theorem}
Every finitely generated object in $\PSh(\C, J_{at}, \mathcal{R})$ or $\Sh(\C, J_{at}, \mathcal{R})$ is Noetherian.
\end{theorem}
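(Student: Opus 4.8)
The plan is to reduce all five cases to the single case $\C = \OI$, which is known (see \cite[Theorem 6.15]{NR}, and for $\C=\FI$ also \cite[Corollary 1.5]{LPY}), together with a routine descent from presheaves to sheaves. I would first treat the presheaf category. For each $\C \in \{\FI, \BI, \CI, \SI\}$ there is a faithful functor $\iota\colon \OI \to \C$ which is a bijection on objects, sending $[n]$ with its canonical linear order to $[n]$ with its canonical finite-set structure, betweenness relation, cyclic order, or separation relation, respectively; an order-preserving injection respects each of these structures, so $\iota$ is well defined. Since the coefficient system is the same in all cases ($\mathcal{R}_n = k[x_1,\dots,x_n]$, with $\epsilon\in\C([m],[n])$ acting by $x_i\mapsto\epsilon(x_i)$), restriction along $\iota$ gives an exact and faithful functor
\[
\iota^{*}\colon \PSh(\C^{\op}, J_{at}, \mathcal{R}) \longrightarrow \PSh(\OI^{\op}, J_{at}, \mathcal{R}), \qquad \iota^{*}\mathcal{R}=\mathcal{R}.
\]

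The crucial point is that $\C$ is only a \emph{finite} enlargement of $\OI$: for all $n,s$,
\[
\C([n],[s]) \;=\; \OI([n],[s])\cdot\Aut_{\C}([n]),
\]
where $\Aut_{\C}([n])$ is finite --- it is $S_n$ for $\FI$, the order-two group generated by order reversal for $\BI$, the rotation group $\mathbb{Z}/n$ for $\CI$, and the dihedral group generated by rotations and a reversal for $\SI$. Indeed, any $\C$-morphism $\epsilon\colon[n]\to[s]$ is the order-preserving inclusion of its image $\{i_1<\dots<i_n\}$ into $[s]$ --- an $\OI$-morphism, after identifying $\{i_1,\dots,i_n\}$ with $[n]$ order-preservingly --- precomposed with the automorphism of the canonical $\C$-structure on $[n]$ recording how $\epsilon$ maps $[n]$ onto its image; that this comparison map lies in $\Aut_{\C}([n])$ is the homogeneity of the structure in question. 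Granting this, $\iota^{*}P_n^{\C}$ is generated by the (finitely many) $\Aut_{\C}([n])$-translates of its standard generator, hence is a quotient of $(P_n^{\OI})^{\oplus|\Aut_{\C}([n])|}$ and in particular finitely generated; therefore $\iota^{*}$ sends finitely generated presheaves to finitely generated presheaves. Now let $V\in\PSh(\C^{\op}, J_{at}, \mathcal{R})$ be finitely generated and $U_1\subseteq U_2\subseteq\cdots$ an ascending chain of subobjects. Then $\iota^{*}V$ is finitely generated, hence Noetherian by the $\OI$ case, so the chain $\iota^{*}U_1\subseteq\iota^{*}U_2\subseteq\cdots$ stabilizes; since $\iota^{*}$ is exact and faithful it reflects zero objects, hence strict inclusions, so the original chain stabilizes. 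This proves the presheaf statement for $\FI,\BI,\CI,\SI$, and the case $\C=\OI$ is the input from \cite{NR}. (One can in fact also recover $\FI$ from $\OI$ this way, since $\FI([n],[s])=\OI([n],[s])\cdot S_n$.)

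For the sheaf statement, let $V$ be a finitely generated object of $\Sh(\C^{\op}, J_{at}, \mathcal{R})$, the target of some epimorphism from a finite sum of the $P_{n_i}$; passing to the adjoint map of presheaves and taking its presheaf image produces a finitely generated presheaf $W$ with $aW\cong V$, where $a$ denotes sheafification. Given an ascending chain of subsheaves $U_1\subseteq U_2\subseteq\cdots$ of $V$, the chain of their preimages in $W$ stabilizes because $W$ is Noetherian by the presheaf case; applying the exact functor $a$ recovers the $U_i$ and shows the chain of subsheaves stabilizes. Hence $V$ is Noetherian.

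The main obstacle I anticipate is the combinatorial identity $\C([n],[s])=\OI([n],[s])\cdot\Aut_{\C}([n])$ with $\Aut_{\C}([n])$ finite: it is immediate for $\FI$ and $\OI$ and straightforward for $\BI$ and $\CI$, but it requires a careful --- though entirely finite --- check for the separation relation $\SI$, the least familiar of Cameron's five structures. Everything else is formal once the $\OI$ case is invoked.
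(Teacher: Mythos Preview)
Your proposal is correct and follows essentially the same approach as the paper: both restrict along the inclusion $\OI \hookrightarrow \C$, use the factorization $\C([n],[s]) = \OI([n],[s]) \cdot \Aut_{\C}([n])$ (which the paper phrases as the \emph{unique} decomposition $\epsilon = \epsilon' \circ g$ with $\epsilon' \in \OI$ and $g \in \C([n],[n])$) to see that $\iota^{*}P_n^{\C}$ is finitely generated over $\OI$, and then invoke \cite[Theorem 6.15]{NR}; the sheaf case is likewise reduced to presheaves via sheafification in both. One small remark: because the factorization is in fact unique, $\iota^{*}P_n^{\C}$ is not merely a quotient but actually a direct sum of $|\Aut_{\C}([n])|$ copies of $P_n^{\OI}$, though your weaker statement already suffices.
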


\begin{proof}
Firstly we consider the category $\PSh(\C, J_{at}, \mathcal{R})$. For $\C = \FI$ or $\C = \OI$, this has been established in \cite[Theorem 6.15]{NR}. We can use the same strategy to prove the conclusion for the other three categories.

Explicitly, let $\D = \OI$, and $\C$ be $\BI$, $\CI$ or $\SI$. Then $\D$ is a subcategory of $\C$. Moreover, one can easily check the following fact: for any $m, n \in \mathbb{N}$ and any morphism $\epsilon \in \C([m], [n])$, there is a unique $\epsilon' \in \D([m], [n])$ and a unique $g \in \C([m], [m])$ such that $\epsilon = \epsilon' \circ g$. Therefore, for a fixed member $P_n^{\C}$ in the above family of generators of $\PSh(\C, J_{at}, \mathcal{R})$, its restriction to $\D$ is a direct sum of $|\C([n], [n])|$ copies of $P_n^{\D}$. Consequently, an object $V$ in $\PSh(\C, J_{at}, \mathcal{R})$ is finitely generated only if its restriction in $\PSh(\D, J_{at}, \mathcal{R})$ is finitely generated. The other direction is also true, following from the equivalent definition given in Remark \ref{equivalent definition} and the fact that $\D$ is a subcategory of $\C$.

We have proved that $V$ is finitely generated if and only if its restriction to $\D$ is finitely generated. Thus it is Noetherian if and only if so is its restriction to $\D$. The conclusion for $\PSh(\C, J_{at}, \mathcal{R})$ then follows from this observation and \cite[Theorem 6.15]{NR}.

Now we turn to $\Sh(\C, J_{at}, \mathcal{R})$. Let $W$ be a subobject of a finitely generated object $V$ in $\Sh(\C, J_{at}, \mathcal{R})$. We need to show that $W$ is finitely generated in $\Sh(\C, J_{at}, \mathcal{R})$. Note that we have a pair of adjoint functors
\[
\mathrm{inc}: \Sh(\C, J_{at}, \mathcal{R}) \longrightarrow \PSh(\C, J_{at}, \mathcal{R}), \quad \sharp: \PSh(\C, J_{at}, \mathcal{R}) \longrightarrow \Sh(\C, J_{at}, \mathcal{R})
\]
where $\mathrm{inc}$ is the inclusion functor and $\sharp$ is the sheafification functor. Via $\mathrm{inc}$, we can view $W$ as a subpresheaf of the presheaf $V$. Furthermore, we can assume that $V$ is finitely generated in $\PSh(\C, J_{at}, \mathcal{R})$. Indeed, since $V$ is a finitely generated sheaf, we have an epimorphism
\[
\varphi: \bigoplus_{n \in \mathbb{N}} P_n^{c_n} \longrightarrow V
\]
in $\Sh(\C, J_{at}, \mathcal{R})$ such that the sum of multiplicities $c_n$ is finite. Let $V'$ be the image of $\varphi$ in $\PSh(\C, J_{at}, \mathcal{R})$. Then $V'$ and $V$ represent the same object in $\Sh(\C, J_{at}, \mathcal{R})$ since $\Sh(\C, J_{at}, \mathcal{R})$ is an exact reflective localization of $\PSh(\C, J_{at}, \mathcal{R})$. Thus if necessary we can replace $V$ by $V'$, which is a finitely generated presheaf. By the result we just established, $V$ is a Noetherian object in $\PSh(\C, J_{at}, \mathcal{R})$, so $W$ is finitely generated in $\PSh(\C, J_{at}, \mathcal{R})$, and hence finitely generated in $\Sh(\C, J_{at}, \mathcal{R})$ by the following well known fact: if a morphism between two sheaves is an epimorphism in the presheaf category, then it is also an epimorphism in the sheaf category.
\end{proof}

Thus by the equivalence established in \cite[Theorem 2.2]{LPY}, we have:

\begin{corollary}
Suppose that the action of $G$ on $\Omega$ is highly homogeneous, and let $R = k[\Omega]$, the polynomial ring over a commutative Noetherian ring $k$. Then every finitely generated discrete $RG$-module is Noetherian.
\end{corollary}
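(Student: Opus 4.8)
The plan is to obtain this corollary as a purely formal consequence of the preceding Theorem together with the chain of category equivalences assembled earlier in the paper. First I would record the equivalence
\[
RG \Mod^{\dis} \simeq \Sh(\mathcal{A}^{\op}, J_{at}, \tilde{\mathcal{R}}) \simeq \Sh(\C^{\op}, J_{at}, \mathcal{R}),
\]
which is produced as follows: Cameron's classification (Theorem \ref{classification}) together with Theorem \ref{res is an equivalence} lets us assume $G$ is one of the five model groups; Lemma \ref{strong amalgamation} (applied via Lemma \ref{structure sheaf}'s ambient hypothesis) gives the strong amalgamation property, so Theorem \ref{orbit categories} identifies $\mathcal{G}^{\op}$ with $\mathcal{A}$; in the highly homogeneous case two finite substructures of $\mathcal{M}$ are isomorphic exactly when they have the same cardinality, so $\mathcal{A}$ is equivalent to one of $\FI, \OI, \BI, \CI, \SI$, denoted $\C$; and finally \cite[Theorem 2.2]{LPY}, with the structure sheaf $\mathcal{R}$ computed in Lemma \ref{structure sheaf}, yields the displayed sheaf-theoretic description.

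Second, I would check that under this equivalence the notion of finite generation of discrete $RG$-modules matches the notion of finite generation in $\Sh(\C^{\op}, J_{at}, \mathcal{R})$ used in the preceding Theorem. The point is that $\{ R(G/G_{\Gamma}) \mid \Gamma \subseteq \Omega \text{ finite} \}$ is already a set of generators of $RG \Mod^{\dis}$: every open subgroup $H \leqslant G$ contains some pointwise stabilizer $G_{\Gamma}$, so every $R(G/H)$ is a quotient of some $R(G/G_{\Gamma})$. Under the equivalence $R(G/G_{\Gamma})$ is carried to the generator $P_n$ with $n = |\Gamma|$, as recorded just before the preceding Theorem (see \cite[Subsection 3.2]{LPY}). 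Hence $V \in RG \Mod^{\dis}$ is finitely generated (equivalently, generated as an $RG$-module by finitely many elements, equivalently, receives an epimorphism from a finite direct sum of the $R(G/G_{\Gamma})$'s) if and only if the corresponding sheaf is finitely generated in the sense of the definition preceding the Theorem.

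Third, since an equivalence of Grothendieck categories is exact and preserves arbitrary direct sums, epimorphisms, and the subobject lattice of every object, it carries Noetherian objects to Noetherian objects. Applying the preceding Theorem, every finitely generated object of $\Sh(\C^{\op}, J_{at}, \mathcal{R})$ is Noetherian; transporting along the equivalence, every finitely generated discrete $RG$-module is Noetherian, which is the assertion.

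There is essentially no analytic or combinatorial difficulty remaining at this stage; everything reduces to bookkeeping across equivalences that have already been constructed. The one point that genuinely has to be verified rather than merely asserted is the compatibility in the second step: that the categorical generators $P_n$ are precisely the images of the modules $R(G/G_{\Gamma})$ with $|\Gamma| = n$, and that these modules do generate $RG \Mod^{\dis}$. Once this identification is in place, the corollary is immediate from the Theorem and the exactness of the equivalence.
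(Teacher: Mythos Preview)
Your proposal is correct and follows exactly the approach the paper takes: the paper deduces the corollary in a single line from the preceding theorem via the equivalence $RG\Mod^{\dis}\simeq\Sh(\C^{\op},J_{at},\mathcal{R})$ established by \cite[Theorem 2.2]{LPY}. Your second step, verifying that the generators $P_n$ correspond to the modules $R(G/G_{\Gamma})$ and hence that finite generation transfers across the equivalence, is stated explicitly in the paper just before the preceding theorem (with reference to \cite[Subsection 3.2]{LPY}), so you are simply unpacking what the paper leaves implicit.
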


\end{document}